\newtheorem{theorem}{Theorem}
\newtheorem{corollary}[theorem]{Corollary}
\newtheorem{definition}[theorem]{Definition}
\newtheorem{example}[theorem]{Example}
\newtheorem{lemma}[theorem]{Lemma}
\newtheorem{remark}[theorem]{Remark}
\newcommand{\crd}{{\rm C}_{\rm rd}}
\providecommand{\keywords}[1]{\textbf{\textit{Keywords:}} #1}
\providecommand{\subjclass}[1]{\textbf{\textit{MSC 2010:}} #1}
\title{Exact Solution to a Dynamic SIR Model\thanks{This is a preprint 
of a paper whose final and definite form is with 
\emph{Nonlinear Analysis: Hybrid Systems}, ISSN: 1751-570X, 
available at \texttt{https://doi.org/10.1016/j.nahs.2018.12.005}.
Submitted 16/May/2018; Revised 10/Oct/2018; Accepted for publication 18/Dec/2018.}}
\author[1]{Martin Bohner\thanks{Martin Bohner: bohner@mst.edu}}
\affil[1]{\emph{Department of Mathematics \& Statistics,
Missouri University of Science and Technology, 65409 Rolla, MO, USA}}
\author[2]{Sabrina Streipert\thanks{Sabrina Streipert: s.streipert@uq.edu.au}}
\affil[2]{\emph{Centre for Applications in Natural Resource Mathematics,\newline
School of Mathematics and Physics, University of Queensland,\newline 
4067 St Lucia, QLD, Australia}}
\author[3]{Delfim F. M. Torres\thanks{Delfim F. M. Torres: delfim@ua.pt}}
\affil[3]{\emph{R\&D unit CIDMA, Department of Mathematics,
University of Aveiro, 3810-193 Aveiro, Portugal}}
\begin{document}
	
\date{}

\maketitle


\begin{abstract}
We investigate an epidemic model based on Bailey's continuous differential system. 
In the continuous time domain, we extend the classical model to time-dependent coefficients 
and present an alternative solution method to Gleissner's approach. 
If the coefficients are constant, both solution methods yield the same result. 
After a brief introduction to time scales, we formulate the SIR (susceptible-infected-removed) 
model in the general time domain and derive its solution. In the discrete case, this provides 
the solution to a new discrete epidemic system, which exhibits the same behavior 
as the continuous model. The last part is dedicated to the analysis of the limiting behavior 
of susceptible, infected, and removed, which contains biological relevance.
\end{abstract}


\subjclass{92D25; 34N05.}

\keywords{dynamic equations on time scales; 
deterministic epidemic model; 
closed-form solution;
time-varying coefficients; 
asymptotic behavior.}


\section{Introduction}

Modeling infectious diseases is as important as it has been in 1760, when Daniel Bernoulli 
presented a solution to his mathematical model on smallpox. It was however not until the $20$th 
century that mathematical models became a recognized tool to study the causes and effects of epidemics. 
In 1927, Kermack and McKendrick introduced their SIR-model based on the idea of grouping 
the population into susceptible, infected, and removed. The model assumes a constant total 
population and an interaction between the groups determined by the disease transmission 
and removal rates. Although the removed represent in some models the vaccinated individuals, 
it can also be used to transform a time dependent population size into a constant population. 
In the latter case, the total number of contacts that a susceptible individual could get 
in contact with, is not the individuals of all three groups but $x+y$, where $x$ is the number 
of susceptible and $y$ the number of infected individuals. Let $k$ be the actual number 
of individuals a susceptible interacts with and $p$ be the probability that a susceptible 
gets infected at contact with an infected individual. Then $pk \frac{y}{x+y}$ is the rate 
at which one susceptible enters the group of infected \cite{MR0452809}. This leads 
to the rate of change for the group of susceptible as
\begin{equation*}
\frac{{\rm d} x}{{\rm d} t}=-pk\frac{y}{x+y}x.
\end{equation*}
Similarly, the infected increase by that rate, but some infected leave the class of infected, 
due to death for example, at a rate $c$, which yields the differential equation in $y$ as
\begin{equation*}
\frac{{\rm d} y}{{\rm d} t}=pk\frac{y}{x+y}x-cy.
\end{equation*}
To obtain a system with time independent sum, a third group is added, 
the group of removed individuals for example, denoted by $z$, 
with the dynamics given by
\begin{equation*}
\frac{{\rm d} z}{{\rm d} t}=cy.
\end{equation*}
Many modifications of the classical model have been 
investigated such as models with vital dynamics, see \cite{MR1706068,MR1229446,MR3703345}. 
To model the spreading of diseases between different states, a spatial variable was added, 
which led to a partial differential system, see \cite{MR609362,MR2311671}. Already in 1975, 
Bailey discussed in \cite{MR0452809} the relevance of stochastic terms in the mathematical 
model of epidemics, which is still an attractive way of modeling the uncertainty 
of the transmission and vaccines, see \cite{MR3648939,MR2877694,MR3648941,MR3808514}. 
Although these modifications exist, so far there has been no success in generalizing 
the epidemic models to a general time scale to allow modeling a noncontinuous disease dynamics. 
A disease, where the virus remains within the host for several years unnoticed before 
continuing to spread, is only one example that can be modeled by time scales. We trust 
that this work provides the foundation for further research on generalizing epidemic 
models to allow modeling of discontinuous epidemic behavior. 


\section{Continuous SIR Model}

We investigate a susceptible-infected-removed (SIR) model 
proposed by Norman Bailey in \cite{MR0452809} of the form
\begin{equation}
\label{Bailey1}
\begin{cases} x' = -\frac{b x y}{x+y}, & \quad \\
y' =\frac{b x y}{x+y}-cy, & \quad \\
z'= cy, & \quad 
\end{cases}
\end{equation}
with initial conditions $x(t_0)=x_0>0$, $y(t_0)=y_0>0$, $z(t_0)=z_0 \geq 0$, 
$x,y,z: \mathbb{R}\to \mathbb{R}_0^+$, and $b,c \in \mathbb{R}_0^+$.
The variable $x$ represents the group of susceptible, $y$ the infected population, 
and $z$ the removed population. By adding the group of removed, 
the total population $N=x+y+z$ remains constant.
In \cite{MR945530}, assuming $x,y>0$, the model is solved by rewriting 
the first two equations in \eqref{Bailey1} as
\begin{equation*}
\begin{cases}
\frac{x'}{x} = -\frac{b }{x+y}y, & \quad \\
\frac{y'}{y} =\frac{b x }{x+y}-c. & \quad\\
\end{cases}
\end{equation*}
Subtracting these equations yields
\begin{equation*}
\frac{x'}{x}-\frac{y'}{y}=-b+c,
\end{equation*}
i.e., 
\begin{equation*}
\frac{y'}{y}=\frac{x'}{x}+b-c,
\end{equation*}
which is equivalent to
\begin{equation*}
(\ln y )' = (\ln x)'+(b-c).
\end{equation*}
Integrating both sides and taking the exponential, one gets
\begin{equation}
\label{overlap}
y  = x \kappa e^{(b-c)(t-t_0)}, 
\quad \mbox{ where } \kappa = \frac{y_0}{x_0}.
\end{equation}
If $b \neq c$, then, plugging this into the first equation in \eqref{Bailey1} 
yields a first order linear homogeneous differential equation 
with the solution given by
\begin{equation}
\label{Eq1_B}
x(t)=x_0 \left( 1+\kappa \right)^{\frac{b}{b-c}} 
\left( 1+\kappa e^{(b-c)(t-t_0)} \right)^{-\frac{b}{b-c}}.
\end{equation}
Replugging yields the solution of \eqref{Bailey1} as
\begin{equation}
\label{sol:gen1}
\begin{cases}
x(t)=x_0 (1+\kappa)^{\frac{b}{b-c}}(1+\kappa e^{(b-c)(t-t_0)})^{-\frac{b}{b-c}},\\
y(t)=y_0 (1+\kappa)^{\frac{b}{b-c}}(1+\kappa e^{(b-c)(t-t_0)})^{-\frac{b}{b-c}}e^{(b-c)(t-t_0)}, \\
z(t)=N-(x_0+y_0)^{\frac{b}{b-c}}(x_0+y_0 e^{(b-c)(t-t_0)})^{-\frac{c}{b-c}}.
\end{cases}
\end{equation}
If $b=c$, then \eqref{overlap} gives $y=x \kappa$, 
and the solution \eqref{sol:gen1} of \eqref{Bailey1} is
\begin{equation*}
\begin{cases}
x(t)=x_0 e^{-\frac{b\kappa (t-t_0)}{1+\kappa}},& \quad \\
y(t)=y_0 e^{-\frac{b\kappa (t-t_0)}{1+\kappa}}, & \quad \\
z(t)=N-(x_0+y_0)e^{-\frac{b\kappa (t-t_0)}{1+\kappa}}.& \quad \\
\end{cases}
\end{equation*}

In this work, we present a different method to solve \eqref{Bailey1}, 
considering not only constant $b,c$ but $b,c:\mathbb{R} \to \mathbb{R}^+$. 
This will allow us to find the solution to the model on time scales. 
To this end, define $w:=\frac{x}{y}>0$ for $x,y >0$ to get 
\begin{equation*}
w'=\frac{x'y-y'x}{y^2}=\frac{\frac{-b(t)xy}{x+y}y 
- \left( \frac{b(t)xy}{x+y}-c(t)y\right)x}{y^2}=\frac{-b(t)xy + c(t)xy}{y^2}=(c-b)(t)w,
\end{equation*}
which is a first-order homogeneous differential equation with solution 
\begin{equation*}
w(t)=w_0e^{\int_{t_0}^t (c-b)(s) \, {\rm d} s},
\end{equation*}
i.e., 
\begin{equation}
\label{solpart1}
y(t)=\kappa e^{\int_{t_0}^t (b-c)(s) \, {\rm d} s}x(t),
\end{equation}
which is the same as \eqref{overlap} for constant $b,c$. 
We plug \eqref{solpart1} into \eqref{Bailey1} to get
\begin{equation*}
x'=-\frac{b(t)x^2\kappa e^{\int_{t_0}^t (b-c)(s) 
\, {\rm d} s}}{x+\kappa x e^{\int_{t_0}^t (b-c)(s) \, {\rm d} s}}
=-\frac{b(t)\kappa e^{\int_{t_0}^t (b-c)(s) 
\, {\rm d} s}}{1+\kappa e^{\int_{t_0}^t (b-c)(s) \, {\rm d} s}}x,
\end{equation*} 
which has the solution
\begin{equation}
\label{Eq2_B}
x(t)=x_0 \exp \left \{   -\kappa \int_{t_0}^t b(s)\left(  \kappa 
+ e^{ \int_{t_0}^s (c-b)(\tau)\, {\rm d} \tau} \right)^{-1}\, {\rm d} s  \right \}.
\end{equation}
Note that, for constant $b,c$ with $b\neq c$, \eqref{Eq2_B} simplifies to \eqref{Eq1_B}.
Hence, the solution to \eqref{Bailey1} is given by
\begin{equation}
\label{sol:Sadded}
\begin{cases}
x(t)=x_0 \exp \left \{ - \kappa \int_{t_0}^t b(s)\left(  \kappa 
+ e^{ \int_{t_0}^s (c-b)(\tau)\, {\rm d} \tau} \right)^{-1}
\, {\rm d} s  \right \},& \quad \\[2mm]
y(t)=y_0 \exp \left \{ \int_{t_0}^t  \left[b(s)  \left(  1 
+ \kappa e^{ \int_{t_0}^s (b-c)(\tau)\, {\rm d} \tau} \right)^{-1}
- c(s)\right]\, {\rm d} s  \right \},& \quad \\[2mm]
z(t)=N-\left( y_0 e^{\int_{t_0}^t (b-c)(s) \, {\rm d} s} 
+x_0\right)\exp \left \{ - \kappa \int_{t_0}^t b(s)\left(  \kappa 
+ e^{ \int_{t_0}^s (c-b)(\tau)\, {\rm d} \tau} \right)^{-1}\, 
{\rm d}s  \right \}.& \quad \\
\end{cases}
\end{equation}
The time-varying parameters $b$ and $c$ allow us to investigate epidemic models, 
where the transmission rate peaks in early years before reducing, for example 
due to initial ignorance but increasing precaution of susceptibles. This behavior 
could be described by the probability density function of the log-normal 
distribution. A removal rate that increases rapidly to a constant rate could 
be modeled by a ``von Bertalanffy'' type function, see Figure~\ref{Figexi1}.
\begin{SCfigure}
\centering
\includegraphics[width=0.5\textwidth]{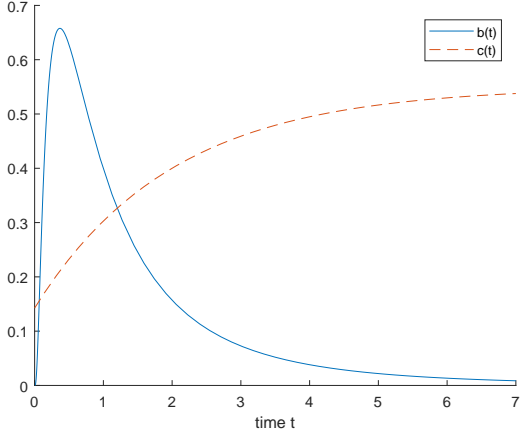}
\caption{Time-varying parameters $b$ and $c$ of ``von Bertalanffy'' type:	
$b(t) = \frac{1}{t \sqrt{2\pi}}e^{-\frac{\ln^2(t)}{2}}$ and
$c(t) = 0.55\left( 1-e^{-0.5t-0.3}  \right)$.}
\label{Figexi1}
\end{SCfigure}
Using these parameter functions with initial conditions $x_0=0.4$ and $y_0=1.2$ 
leads to the behavior in Figure~\ref{Figexi2}. We see that the group of infected 
increases before reducing due to an increasing removal rate $c$. 
Zooming into the last part of the time interval, we see that the number 
of susceptibles converges.
\begin{SCfigure}
\centering
\includegraphics[width=0.5\textwidth]{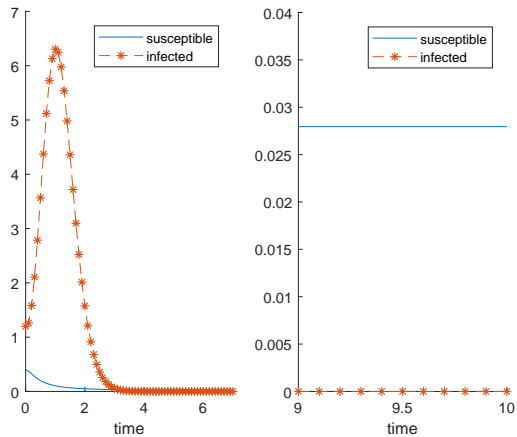}
\caption{Dynamics of the susceptible $x$ and the infected $y$ populations
with initial conditions $x_0=0.4$ and $y_0=1.2$ in the case of 
the time-varying coefficients $b$ and $c$ of Figure~\ref{Figexi1}.}
\label{Figexi2}
\end{SCfigure}

\begin{example}
Considering a simple decreasing transmission rate to account for the rising 
precaution of susceptibles and a simple decreasing removal rate accounting 
for medical advances, for example by choosing $b(t)=\frac{1}{t+1}$ 
and $c(t)=\frac{2}{t+1}$, the solution with $t_0=0$ is given by \eqref{sol:Sadded} as 
\begin{equation*}
\begin{cases}
x(t)=x_0 \exp \left\{  \int_{0}^t 
\frac{- \kappa}{ (s+1)(\kappa + s+1)}\, {\rm d} s  \right\}
=x_0 \frac{\kappa+1+t}{(\kappa+1)(t+1)}, &\\[2.5mm]
y(t)=y_0 \exp \left \{ \frac{1}{\kappa+1+s} -\frac{2}{s+1}  \right\}
=y_0 \frac{\kappa+1+t}{(\kappa+1)(t+1)^2}, &\\[2.5mm]
z(t)=N-\frac{1+\kappa+t}{t+1}\left\{  \frac{x_0}{\kappa+1}
-\frac{y_0}{(\kappa+1)(t+1)}\right\},&
\end{cases}
\end{equation*}
where $N=x_0+y_0+z_0$ and $\kappa=\frac{y_0}{x_0}$.
\end{example}


\section{Time Scales Essentials}

In order to formulate the time scales analogue to the model proposed by Norman Bailey, 
we first introduce fundamentals of time scales that we will use. The following 
introduces the main definitions in the theory of time scales. 

\begin{definition}[See \protect{\cite[Definition 1.1]{MR1843232}}]
For $t\in \mathbb{T}$, the forward jump operator $\sigma:\mathbb{T}\to \mathbb{T}$ is 
\begin{equation*}
\sigma(t):=\inf\{s \in \mathbb{T}:  s \, > t\}.
\end{equation*}
For any function $f:\mathbb{T} \to \mathbb{R}$, we put $f^{\sigma}=f \circ \sigma$. 
If $t \in \mathbb{T}$ has a left-scattered maximum $M$, then we define 
$\mathbb{T}^{\kappa} = \mathbb{T} \setminus \{M\}$; 
otherwise, $\mathbb{T}^{\kappa} = \mathbb{T}$.
\end{definition}

\begin{definition}[See \protect{\cite[Definition 1.24]{MR1962542}}]
A function $p:\mathbb{T} \to \mathbb{R}$ is called rd-continuous provided 
$p$ is continuous at $t$ for all right-dense points $t$ and the left-sided 
limit exists for all left-dense points $t$. The set of rd-continuous functions 
is denoted by ${\rm C}_{\rm rd} = {\rm C}_{\rm rd}(\mathbb{T}) 
= {\rm C}_{\rm rd}(\mathbb{T}, \mathbb{R})$. 
\end{definition}

\begin{definition}[See \protect{\cite[Definition 2.25]{MR1843232}}]
A function $p:\mathbb{T}\to \mathbb{R}$ is called regressive provided 
$$ 
1+\mu(t)p(t) \neq 0  \quad  \mbox{  for all  }  
t \in \mathbb{T}, \quad \mbox{where } \mu(t)=\sigma(t)-t. 
$$
The set of \emph{regressive} and \emph{rd-continuous functions} is denoted by $\mathcal{R}=\mathcal{R}(\mathbb{T})=\mathcal{R}(\mathbb{T},\mathbb{R})$. 
Moreover, $p \in \mathcal{R}$ is called positively regressive, 
denoted by $\mathcal{R^+}$, if 
$$
1+\mu(t)p(t) > 0 \quad \mbox{  for all  }   t \in \mathbb{T}. 
$$
\end{definition}

\begin{definition}[See \protect{\cite[Definition 1.10]{MR1843232}}]
Assume $f:\mathbb{T}\to \mathbb{R}$ and $t \in \mathbb{T}^{\kappa}$. 
Then the derivative of $f$ at $t$, denoted by $f^{\Delta}(t)$, 
is the number such that for all $\varepsilon >0$, there exists $\delta>0$, such that 
\begin{equation*}
\left | f(\sigma(t))-f(s)-f^{\Delta}(t)(\sigma(t)-s)\right| 
\leq \varepsilon \left | \sigma(t)-s\right |
\end{equation*}
for all $s \in (t-\delta,t+\delta) \cap \mathbb{T}$.
\end{definition}

\begin{theorem}[See \protect{\cite[Theorem 2.33]{MR1843232}}]
Let $p \in \mathcal{R}$ and $t_0 \in \mathbb{T}$. Then  
\begin{equation*}
y^{\Delta}=p(t)y, \quad y(t_0)=1
\end{equation*}
possesses a unique solution, called the exponential 
function and denoted by $e_p(\cdot,t_0)$.  
\end{theorem}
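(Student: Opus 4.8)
The plan is to produce $e_p(\cdot,t_0)$ by an explicit formula and then settle uniqueness by a short quotient-rule argument. First I would recall the \emph{cylinder transformation}: for $h>0$ put $\xi_h(z):=\tfrac1h\,\mathrm{Log}(1+hz)$ with $\mathrm{Log}$ the principal logarithm, and $\xi_0(z):=z$; these fit together in the sense that $\lim_{h\downarrow 0}\xi_h(z)=z$. Since $p\in\mathcal R$ is rd-continuous and satisfies $1+\mu(t)p(t)\neq 0$ for every $t\in\mathbb T$, the map $t\mapsto \xi_{\mu(t)}(p(t))$ is well defined, and the first real step is to verify that it is again rd-continuous — hence $\Delta$-integrable and in possession of a $\Delta$-antiderivative. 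I would then \emph{define}
\[
e_p(t,t_0):=\exp\left(\int_{t_0}^{t}\xi_{\mu(s)}(p(s))\,\Delta s\right),\qquad t\in\mathbb T,
\]
which never vanishes and satisfies $e_p(t_0,t_0)=\exp(0)=1$.

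The substantive check is $e_p^\Delta(\cdot,t_0)=p\,e_p(\cdot,t_0)$ on $\mathbb T^\kappa$, which I would do by cases, writing $g(t):=\int_{t_0}^{t}\xi_{\mu(s)}(p(s))\,\Delta s$. If $t$ is right-scattered, then $g(\sigma(t))-g(t)=\mu(t)\,\xi_{\mu(t)}(p(t))=\mathrm{Log}(1+\mu(t)p(t))$, so $e_p(\sigma(t),t_0)=(1+\mu(t)p(t))\,e_p(t,t_0)$ and the difference quotient $\frac{e_p(\sigma(t),t_0)-e_p(t,t_0)}{\mu(t)}$ collapses to $p(t)e_p(t,t_0)$. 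If $t$ is right-dense, then $\mu(t)=0$, so $g^\Delta(t)=\xi_0(p(t))=p(t)$ (the integrand being continuous at $t$), and the chain rule for $\exp\circ g$ at a right-dense point gives $e_p^\Delta(t,t_0)=\exp(g(t))\,g^\Delta(t)=p(t)e_p(t,t_0)$. This establishes existence.

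For uniqueness, let $y$ be any solution of $y^\Delta=p(t)y$, $y(t_0)=1$. Since $e_p(\cdot,t_0)$ — and hence also $e_p(\sigma(\cdot),t_0)$ — is nowhere zero, the quotient $q:=y/e_p(\cdot,t_0)$ is defined on $\mathbb T$, and the $\Delta$-quotient rule gives
\[
q^\Delta=\frac{y^\Delta\,e_p(\cdot,t_0)-y\,e_p^\Delta(\cdot,t_0)}{e_p(\cdot,t_0)\,e_p(\sigma(\cdot),t_0)}=\frac{p\,y\,e_p(\cdot,t_0)-y\,p\,e_p(\cdot,t_0)}{e_p(\cdot,t_0)\,e_p(\sigma(\cdot),t_0)}=0
\]
on $\mathbb T^\kappa$. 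As a function with identically vanishing $\Delta$-derivative is constant, $q\equiv q(t_0)=1$, i.e.\ $y=e_p(\cdot,t_0)$; the solution is therefore unique, and we name it the exponential function.

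I expect the genuine obstacle to be the very first step — showing that $t\mapsto\xi_{\mu(t)}(p(t))$ is rd-continuous, which hinges on the behaviour of $\mu$ near right-dense points together with the joint continuity of $\xi_h(z)$ in $(h,z)$ down to $h=0$. Everything after that is a direct computation from the definition of the $\Delta$-derivative and the product/quotient rules on time scales.
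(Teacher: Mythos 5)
The paper offers no proof of this statement (it is quoted background, cited to Theorem 2.33 of Bohner--Peterson), and your argument --- defining $e_p(t,t_0)=\exp\bigl(\int_{t_0}^{t}\xi_{\mu(s)}(p(s))\,\Delta s\bigr)$ via the cylinder transformation, verifying the IVP by the right-scattered/right-dense case split, and obtaining uniqueness from the quotient rule together with the fact that a function with identically vanishing $\Delta$-derivative is constant --- is precisely the standard proof from that reference and is correct. The one step you leave as a promissory note, the rd-continuity of $t\mapsto\xi_{\mu(t)}(p(t))$, does indeed go through as you indicate, since $\mu$ is rd-continuous and $\xi_h(z)$ is jointly continuous down to $h=0$ (with the principal logarithm handling merely regressive, not positively regressive, $p$).
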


Useful properties of the exponential function are the following.

\begin{theorem}[See \protect{\cite[Theorem 2.36]{MR1843232}}]
\label{Theorem3}
If $p \in \mathcal{R}$, then  
\begin{enumerate}
\item $e_0 (t,s)=1$, and $e_p(t,t)=1$, 
\item $e_p(t,s)=\frac{1}{e_p(s,t)}$,
\item the semigroup property holds:  $e_p(t,r)e_p(r,s)=e_p(t,s)$.
\end{enumerate}
\end{theorem}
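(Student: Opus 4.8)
The plan is to work entirely from the characterization of $e_p(\cdot,t_0)$ supplied by the preceding theorem: it is \emph{the} solution of $y^{\Delta}=p(t)y$, $y(t_0)=1$, and, since this equation is linear with $p\in\mathcal{R}$, any two of its solutions that agree at one point of $\mathbb{T}$ coincide on all of $\mathbb{T}$. Indeed, if $y_1,y_2$ both solve the equation and $y_1(t_1)=y_2(t_1)=c$, then $c\,e_p(\cdot,t_1)$ also solves it with the same value at $t_1$, so uniqueness gives $y_1=y_2=c\,e_p(\cdot,t_1)$ when $c\neq 0$; the remaining case $c=0$ is covered by the existence-and-uniqueness theorem for regressive linear dynamic equations from the cited reference. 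I will use this ``uniqueness at a point'' as the one working tool.

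Part (1) is then immediate: $e_p(t,t)=1$ is just the initial condition of the defining problem with $t_0=t$, and for $e_0(t,s)$ the constant function $1$ solves $y^{\Delta}=0=0\cdot y$ with value $1$ at $s$ (note $0\in\mathcal{R}$), so uniqueness forces $e_0(\cdot,s)\equiv 1$. For part (3), the semigroup property, I would fix $r,s\in\mathbb{T}$ and set $f(t):=e_p(t,r)\,e_p(r,s)$, regarded as a function of $t$ alone. Since $e_p(r,s)$ is a constant, $f^{\Delta}(t)=e_p(r,s)\,\bigl(e_p(\cdot,r)\bigr)^{\Delta}(t)=e_p(r,s)\,p(t)e_p(t,r)=p(t)f(t)$, so $f$ solves $y^{\Delta}=py$, and $f(r)=e_p(r,r)e_p(r,s)=e_p(r,s)$ by part (1). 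The function $g(t):=e_p(t,s)$ also solves $y^{\Delta}=py$ with $g(r)=e_p(r,s)$, so uniqueness at the point $r$ forces $f\equiv g$, i.e.\ $e_p(t,r)e_p(r,s)=e_p(t,s)$ for all $t\in\mathbb{T}$. Part (2) then drops out of (1) and (3): setting $t=s$ in the semigroup identity gives $e_p(s,r)e_p(r,s)=e_p(s,s)=1$, hence $e_p(r,s)\neq 0$ and $e_p(r,s)=1/e_p(s,r)$; relabelling $r\leftrightarrow t$ yields $e_p(t,s)=1/e_p(s,t)$.

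The only step requiring care is the preliminary observation, namely upgrading the stated uniqueness (for initial value $1$) to uniqueness for an arbitrary initial value; this is exactly where linearity and the regressivity hypothesis $p\in\mathcal{R}$ are used, either by the scaling argument above or by invoking the general theorem for regressive linear dynamic equations. Everything after that reduces to short computations with the $\Delta$-derivative of a constant multiple of $e_p(\cdot,r)$ together with part (1), so no further obstacle is expected.
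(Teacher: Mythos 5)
Your argument is correct, but note that the paper itself offers no proof of this statement: it is quoted verbatim from Bohner and Peterson (Theorem 2.36 of the cited book), so there is no in-paper argument to compare against. Your route --- treating $e_p(\cdot,t_1)$ purely through its characterization as the unique solution of $y^{\Delta}=p(t)y$, $y(t_1)=1$, proving (1) and (3) by ``uniqueness at a point'' and then reading (2) off from (3) with $t=s$ --- is a standard and fully rigorous alternative to the book's own treatment, which instead defines $e_p$ via the cylinder transformation, $e_p(t,s)=\exp\bigl\{\int_s^t \xi_{\mu(\tau)}(p(\tau))\,\Delta\tau\bigr\}$, and obtains (1)--(3) essentially from additivity of the integral. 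The only delicate point in your version, upgrading uniqueness from initial value $1$ to an arbitrary (possibly zero) initial value, is handled adequately by your scaling argument plus the appeal to the general theory; in fact you could streamline it by invoking the variation-of-constants theorem already quoted in this paper (with forcing term $f\equiv 0$), which states directly that $y^{\Delta}=p(t)y$, $y(t_0)=y_0$ has the unique solution $e_p(t,t_0)y_0$ for any $y_0\in\mathbb{R}$, making the case distinction $c\neq 0$ versus $c=0$ unnecessary. With that substitution your proof of (3) is a two-line uniqueness comparison, and (1), (2) follow exactly as you wrote.
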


\begin{theorem}[See \protect{\cite[Theorem 2.44]{MR1843232}}]
If $p \in \mathcal{R}^{+}$ and $t_0 \in \mathbb{T}$, 
then $e_p(t,t_0) >0$ for all $t \in \mathbb{T}$.
\end{theorem}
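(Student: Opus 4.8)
The plan is to establish positivity of $e_p(\cdot,t_0)$ first on $[t_0,\infty)\cap\mathbb{T}$ and then to deduce the general case from the relation $e_p(t,t_0)=1/e_p(t_0,t)$ of Theorem~\ref{Theorem3}. Two preliminary observations carry most of the weight. First, $e_p(\cdot,t_0)$ never vanishes: by Theorem~\ref{Theorem3}(2) we have $e_p(t,t_0)\,e_p(t_0,t)=1$, so $e_p(t,t_0)\neq 0$ for every $t\in\mathbb{T}$; consequently, on $[t_0,\infty)\cap\mathbb{T}$ the function $e_p(\cdot,t_0)$ is either everywhere positive or else attains a negative value. Second, since $e_p(\cdot,t_0)$ solves $y^{\Delta}=py$ and the definition of the delta derivative, evaluated at $s=t$, yields $f^{\sigma}(t)=f(t)+\mu(t)f^{\Delta}(t)$, we get the sign-propagation identity
\begin{equation*}
e_p(\sigma(t),t_0)=\bigl(1+\mu(t)p(t)\bigr)\,e_p(t,t_0),\qquad t\in\mathbb{T}^{\kappa},
\end{equation*}
in which the factor $1+\mu(t)p(t)$ is strictly positive because $p\in\mathcal{R}^{+}$; hence $e_p(\sigma(t),t_0)$ and $e_p(t,t_0)$ always have the same sign.

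The core of the argument is a minimality argument. Suppose, for contradiction, that $e_p(t_1,t_0)<0$ for some $t_1>t_0$, and set
\begin{equation*}
t^{*}:=\inf\bigl\{t\in\mathbb{T}:\ t>t_0,\ e_p(t,t_0)<0\bigr\}.
\end{equation*}
Since $\mathbb{T}$ is closed, $t^{*}\in\mathbb{T}$, and $e_p(\tau,t_0)>0$ for every $\tau\in[t_0,t^{*})\cap\mathbb{T}$. Consider $e_p(t^{*},t_0)$, which is nonzero. If it were negative, then $t^{*}$ could not be left-dense, since otherwise continuity of $e_p(\cdot,t_0)$ (a delta-differentiable function is continuous) along a sequence $\tau_n\uparrow t^{*}$ in $\mathbb{T}$ would give $e_p(t^{*},t_0)=\lim_{n}e_p(\tau_n,t_0)\ge 0$; thus $t^{*}$ would be left-scattered, and the sign-propagation identity at $\rho(t^{*})$ yields $e_p(t^{*},t_0)=\bigl(1+\mu(\rho(t^{*}))p(\rho(t^{*}))\bigr)e_p(\rho(t^{*}),t_0)>0$, a contradiction. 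Hence $e_p(t^{*},t_0)>0$. But then, whether $t^{*}$ is right-scattered (so $(t^{*},\sigma(t^{*}))\cap\mathbb{T}=\emptyset$) or right-dense (so continuity makes $e_p(\cdot,t_0)>0$ on a one-sided neighborhood of $t^{*}$ in $\mathbb{T}$), the set $\{t\in\mathbb{T}:t>t_0,\ e_p(t,t_0)<0\}$ is bounded below by a point strictly larger than $t^{*}$, contradicting the definition of $t^{*}$. Therefore $e_p(t,t_0)>0$ for all $t\ge t_0$. For $t<t_0$, the same forward argument applied with base point $t$ (the exponential function exists for any base point in $\mathbb{T}$) gives $e_p(t_0,t)>0$, whence $e_p(t,t_0)=1/e_p(t_0,t)>0$ by Theorem~\ref{Theorem3}(2).

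The one genuinely delicate point — and the reason the minimality bookkeeping is needed — is the interplay between the gap structure of $\mathbb{T}$ and its dense points: across a right-scattered step the sign is determined exactly by positive regressivity, whereas across a continuous stretch one must invoke continuity of $e_p(\cdot,t_0)$ and an intermediate-value observation. A shorter but less self-contained route would be to import the cylinder-transformation formula $e_p(t,t_0)=\exp\bigl(\int_{t_0}^{t}\xi_{\mu(s)}(p(s))\,\Delta s\bigr)$, where $\xi_h(z)=\tfrac{1}{h}\log(1+hz)$ for $h>0$ and $\xi_0(z)=z$: positive regressivity makes each $\xi_{\mu(s)}(p(s))$ a real number, so the integrand is real-valued and rd-continuous and $e_p(t,t_0)$ is the exponential of a real number, hence positive. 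I would present the elementary sign-propagation proof as the main argument, since it uses only the defining initial value problem for $e_p$ and the properties collected in Theorem~\ref{Theorem3}.
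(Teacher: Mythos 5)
Your argument is correct, but note that the paper itself offers no proof of this statement: it is quoted verbatim from Bohner--Peterson \cite[Theorem 2.44]{MR1843232}, so the only comparison available is with that source. The textbook proof is essentially the ``shorter route'' you mention at the end: positivity falls out of the cylinder-transformation representation $e_p(t,t_0)=\exp\bigl(\int_{t_0}^{t}\xi_{\mu(s)}(p(s))\,\Delta s\bigr)$, since $1+\mu(s)p(s)>0$ makes the integrand real-valued, so $e_p$ is the exponential of a real number. Your main argument is a genuinely different, more self-contained route: a ``first bad point'' contradiction using only the defining IVP, the identity $f^{\sigma}=f+\mu f^{\Delta}$ (so the sign propagates across right-scattered points exactly because of positive regressivity), continuity of delta-differentiable functions at dense points, and the non-vanishing of $e_p$ from Theorem~\ref{Theorem3}(2). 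This buys independence from the cylinder transformation and its logarithm, at the cost of the infimum bookkeeping; the bookkeeping is sound as written, with only one corner case left implicit, namely when $t^{*}$ is the maximum of $\mathbb{T}$ (then it is neither right-scattered nor right-dense in the usual terminology, but the bad set would have to lie strictly to the right of $t^{*}$ and is therefore empty, so the contradiction is immediate). The reduction of $t<t_0$ to the forward case via $e_p(t,t_0)=1/e_p(t_0,t)$ is also fine.
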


We define a ``circle-plus'' and ``circle-minus'' operation.

\begin{definition}[See \protect{\cite[p.\ 13]{MR1962542}}]
Define the ``circle plus'' addition on $\mathcal{R}$ as  
\begin{equation*}
p \oplus q =p + q + \mu p q
\end{equation*}
and the ``circle minus'' subtraction as 
\begin{equation*}
p \ominus q = \frac{p-q}{1+\mu q}.
\end{equation*}
\end{definition}

It is not hard to show the following identities.

\begin{corollary}[See \protect{\cite{MR1843232}}]
If $p,q \in \mathcal{R}$, then 
\begin{itemize}
\item[a)] $e_{p\oplus q}(t,s)=e_{p}(t,s)e_{q}(t,s),$
\item[b)] $e_{\ominus p}(t,s)=e_{p}(s,t)=\frac{1}{e_{p}(t,s)}.$
\end{itemize}
\end{corollary}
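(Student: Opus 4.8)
The plan is to establish part a) by invoking the existence–uniqueness theorem for the time scales exponential function stated above, and then to read off part b) as an immediate special case. First I would set $f := e_p(\cdot,s)\,e_q(\cdot,s)$ and differentiate using the time scales product rule, $f^\Delta = e_p^\Delta(\cdot,s)\,e_q(\cdot,s) + e_p^\sigma(\cdot,s)\,e_q^\Delta(\cdot,s)$. Feeding in the defining relations $e_p^\Delta(\cdot,s) = p\,e_p(\cdot,s)$ and $e_q^\Delta(\cdot,s) = q\,e_q(\cdot,s)$ together with the elementary identity $e_p^\sigma(\cdot,s) = e_p(\cdot,s) + \mu\,e_p^\Delta(\cdot,s) = (1+\mu p)\,e_p(\cdot,s)$, the right-hand side collapses to $\bigl[p + q(1+\mu p)\bigr] e_p(\cdot,s)\,e_q(\cdot,s) = (p + q + \mu p q)\,f = (p\oplus q)\,f$. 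Since in addition $f(s) = e_p(s,s)\,e_q(s,s) = 1$ by part~1 of Theorem~\ref{Theorem3}, and since $p\oplus q \in \mathcal{R}$ whenever $p,q\in\mathcal{R}$ (a one-line check gives $1 + \mu(p\oplus q) = (1+\mu p)(1+\mu q) \neq 0$), the functions $f$ and $e_{p\oplus q}(\cdot,s)$ solve the same initial value problem $y^\Delta = (p\oplus q)y$, $y(s)=1$; uniqueness then forces $f = e_{p\oplus q}(\cdot,s)$, which is exactly a).

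For b) I would specialize a) to the choice $q = \ominus p$. The key computation is that $p$ is cancelled by $\ominus p$ under $\oplus$: indeed $p \oplus (\ominus p) = p + \frac{-p}{1+\mu p} + \mu p\cdot\frac{-p}{1+\mu p} = p - \frac{p(1+\mu p)}{1+\mu p} = 0$. Hence part~a) yields $e_0(t,s) = e_p(t,s)\,e_{\ominus p}(t,s)$, and since $e_0(t,s) = 1$ by part~1 of Theorem~\ref{Theorem3}, we obtain $e_{\ominus p}(t,s) = \frac{1}{e_p(t,s)}$. Combining this with part~2 of Theorem~\ref{Theorem3}, namely $e_p(s,t) = \frac{1}{e_p(t,s)}$, gives the full chain $e_{\ominus p}(t,s) = e_p(s,t) = \frac{1}{e_p(t,s)}$ claimed in b).

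I do not anticipate a genuine obstacle: the only points needing care are the bookkeeping in the product-rule step and the observation that $p\oplus q$ is again regressive, which is precisely what makes $e_{p\oplus q}$ well defined and legitimizes the appeal to uniqueness. As an alternative I could bypass uniqueness and instead differentiate the quotient $e_{p\oplus q}(\cdot,s)\big/\bigl(e_p(\cdot,s)\,e_q(\cdot,s)\bigr)$, showing its $\Delta$-derivative vanishes identically and that its value at $s$ is $1$; but the uniqueness argument above is shorter and is the version I would write up.
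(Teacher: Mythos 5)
Your argument is correct. Note that the paper itself offers no proof of this corollary --- it is quoted from Bohner and Peterson's book with the remark that the identities are ``not hard to show'' --- and your product-rule-plus-uniqueness derivation of a), followed by the specialization $q=\ominus p$ with $p\oplus(\ominus p)=0$ for b), is exactly the standard argument given in that reference. The only cosmetic omission is the one-line check that $\ominus p\in\mathcal{R}$ (namely $1+\mu(\ominus p)=\frac{1}{1+\mu p}\neq 0$), which you need before invoking part a) with $q=\ominus p$; with that remark added, the write-up is complete.
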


\begin{theorem}[Variation of Constants, see \protect{\cite[Theorems 2.74 and 2.77]{MR1843232}}]
Suppose $p \in \mathcal{R}$ and $f \in \crd$. Let $t_0 \in \mathbb{T}$ and $y_0 \in \mathbb{R}$. 
The unique solution of the IVP
\begin{equation*}
y^{\Delta} = p(t)y+f(t), \quad y(t_0)=y_0
\end{equation*}
is given by 
\begin{equation*}
y(t)=e_p(t,t_0)y_0 + \int_{t_0}^{t} e_p(t,\sigma(s))f(s)\, \Delta s.
\end{equation*}
The unique solution of the IVP
\begin{equation*}
y^{\Delta} = -p(t)y^{\sigma}+f(t), \quad y(t_0)=y_0
\end{equation*}
is given by 
\begin{equation*}
y(t)=e_{\ominus p}(t,t_0)y_0 + \int_{t_0}^{t} e_{\ominus p}(t,s)f(s)\, \Delta s.
\end{equation*}
\end{theorem}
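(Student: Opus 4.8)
The plan is to prove the first representation by the standard route of checking a candidate solution and then invoking uniqueness, and afterwards to obtain the second representation from the first by rewriting the dynamic equation with $y^{\sigma}=y+\mu y^{\Delta}$. For the first formula, define $y$ by $y(t):=e_p(t,t_0)y_0+\int_{t_0}^t e_p(t,\sigma(s))f(s)\,\Delta s$; then $y(t_0)=y_0$ because $e_p(t_0,t_0)=1$ (Theorem~\ref{Theorem3}) and the integral over $[t_0,t_0]$ vanishes. To compute $y^{\Delta}$, I would use the Leibniz rule for delta-differentiating an integral whose integrand depends on the upper limit (\cite{MR1843232}): with $h(t,s):=e_p(t,\sigma(s))f(s)$, the delta derivative in the first variable is $h^{\Delta}(t,s)=p(t)e_p(t,\sigma(s))f(s)$, since $e_p(\cdot,\tau)^{\Delta}=p\,e_p(\cdot,\tau)$ for each fixed $\tau$, while the boundary term is $h(\sigma(t),t)=e_p(\sigma(t),\sigma(t))f(t)=f(t)$. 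Combining this with $\bigl(e_p(\cdot,t_0)y_0\bigr)^{\Delta}=p(t)e_p(t,t_0)y_0$ and factoring out $p(t)$ yields $y^{\Delta}(t)=p(t)y(t)+f(t)$, so $y$ solves the IVP. For uniqueness, if $y_1,y_2$ both solve it, set $w:=y_1-y_2+e_p(\cdot,t_0)$; then $w^{\Delta}=p(t)w$ and $w(t_0)=1$, so, since $p\in\mathcal R$, the existence-and-uniqueness statement for the exponential function (\cite[Theorem 2.33]{MR1843232}) forces $w=e_p(\cdot,t_0)$ and hence $y_1\equiv y_2$.

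For the second formula, I would rewrite $y^{\Delta}=-p(t)y^{\sigma}+f(t)$ using $y^{\sigma}=y+\mu y^{\Delta}$ as $\bigl(1+\mu(t)p(t)\bigr)y^{\Delta}=-p(t)y+f(t)$, that is, $y^{\Delta}=(\ominus p)(t)\,y+\frac{f(t)}{1+\mu(t)p(t)}$, noting $\ominus p\in\mathcal R$ whenever $p\in\mathcal R$. Applying the first formula (now established) with $p$ replaced by $\ominus p$ and $f$ replaced by $f/(1+\mu p)$ gives $y(t)=e_{\ominus p}(t,t_0)y_0+\int_{t_0}^t e_{\ominus p}(t,\sigma(s))\,\frac{f(s)}{1+\mu(s)p(s)}\,\Delta s$, and it remains to simplify the kernel. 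By the semigroup property (Theorem~\ref{Theorem3}), $e_{\ominus p}(t,\sigma(s))=e_{\ominus p}(t,s)\,e_{\ominus p}(s,\sigma(s))$, and $e_{\ominus p}(s,\sigma(s))=1/e_{\ominus p}(\sigma(s),s)$ with $e_{\ominus p}(\sigma(s),s)=1+\mu(s)(\ominus p)(s)=\frac{1}{1+\mu(s)p(s)}$; hence $e_{\ominus p}(t,\sigma(s))\cdot\frac{1}{1+\mu(s)p(s)}=e_{\ominus p}(t,s)$. Substituting this turns the representation into the claimed one, and uniqueness transfers along the same equivalence of equations.

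The only ingredient beyond routine time-scale algebra is the Leibniz rule for delta-differentiating under the integral sign; once it---together with the shift identity $e_q(\sigma(t),t)=1+\mu(t)q(t)$---is available, everything else is bookkeeping with $\oplus$, $\ominus$, $\mu$, and $\sigma$. The steps that deserve genuine care are confirming the hypotheses needed for that rule and for the exponential functions to be well defined ($f\in\crd$, $p\in\mathcal R$, and consequently $\ominus p\in\mathcal R$ and $1+\mu p\neq 0$), and keeping the $\sigma$-shifts straight, since the two formulas differ precisely in whether $e_p$ is evaluated at $\sigma(s)$ or at $s$.
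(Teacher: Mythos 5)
Your argument is correct. Note, however, that the paper does not prove this theorem at all: it is quoted as background, with the proof delegated to \cite[Theorems 2.74 and 2.77]{MR1843232}. So the only meaningful comparison is with the textbook treatment. There, the formulas are \emph{derived} via an integrating-factor computation (delta-differentiating $e_{\ominus p}(\cdot,t_0)\,y$, respectively $e_{p}(\cdot,t_0)\,y$, and integrating), whereas you \emph{verify} the stated candidate using the time-scale Leibniz rule $\bigl(\int_{t_0}^{t}h(t,s)\,\Delta s\bigr)^{\Delta}=h(\sigma(t),t)+\int_{t_0}^{t}h^{\Delta_t}(t,s)\,\Delta s$ together with $e_p(\sigma(t),\sigma(t))=1$, and then settle uniqueness by the neat reduction $w:=y_1-y_2+e_p(\cdot,t_0)$, which satisfies $w^{\Delta}=p\,w$, $w(t_0)=1$, so the uniqueness clause of the exponential IVP (the paper's quoted Theorem 2.33) forces $y_1\equiv y_2$. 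Your derivation of the second formula from the first is also sound: rewriting $y^{\Delta}=-p\,y^{\sigma}+f$ via $y^{\sigma}=y+\mu y^{\Delta}$ gives $y^{\Delta}=(\ominus p)y+\tfrac{f}{1+\mu p}$ (legitimate since $1+\mu p\neq 0$, and reversible, so uniqueness transfers), and the kernel simplification $e_{\ominus p}(t,\sigma(s))\,\tfrac{1}{1+\mu(s)p(s)}=e_{\ominus p}(t,s)$ follows correctly from the semigroup property and $e_{\ominus p}(\sigma(s),s)=1+\mu(s)(\ominus p)(s)=\tfrac{1}{1+\mu(s)p(s)}$; the textbook instead proves the two statements as separate variation-of-constants results. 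Your route costs you the Leibniz rule and the shift identity $e_q(\sigma(t),t)=1+\mu(t)q(t)$ as imported facts (both standard and available in \cite{MR1843232}), but it buys a shorter uniqueness argument that uses only what this paper's preliminaries already state; the integrating-factor route is more constructive, explaining where the formula comes from rather than checking it.
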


\begin{lemma}[See \protect{\cite[Theorem 2.39]{MR1843232}}]
If $p\in \mathcal{R}$ and $a, b, c \in \mathbb{T}$, then 
\begin{equation*}
\int_{a}^{b} p(t) e_{p}(t,c)\Delta t =e_{p}(b,c)-e_{p}(a,c)
\end{equation*}
and
\begin{equation*}
\int_{a}^{b} p(t) e_{p}(c,\sigma(t)) \Delta t=e_{p}(c,a)-e_{p}(c,b).
\end{equation*}
\end{lemma}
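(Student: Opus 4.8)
The plan is to recognize that both integrands are delta-antiderivatives of exponential functions, so that each formula reduces to a single application of the fundamental theorem of calculus for the delta integral, $\int_a^b g^\Delta(t)\,\Delta t=g(b)-g(a)$.

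For the first identity I would invoke the defining property of the exponential function: $e_p(\cdot,c)$ is the unique solution of $y^\Delta=p(t)y$ with $y(c)=1$, and hence $\big(e_p(\cdot,c)\big)^\Delta(t)=p(t)\,e_p(t,c)$. Therefore
\begin{equation*}
\int_a^b p(t)\,e_p(t,c)\,\Delta t=\int_a^b\big(e_p(\cdot,c)\big)^\Delta(t)\,\Delta t=e_p(b,c)-e_p(a,c),
\end{equation*}
which settles the first equation.

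For the second identity the only real work is to compute the delta derivative of $t\mapsto e_p(c,t)$. Since $p\in\mathcal{R}$ implies $\ominus p\in\mathcal{R}$, the exponential $e_{\ominus p}(\cdot,c)$ is defined, and the reciprocity relation $e_p(c,t)=e_{\ominus p}(t,c)$ gives $\big(e_p(c,\cdot)\big)^\Delta(t)=(\ominus p)(t)\,e_{\ominus p}(t,c)=(\ominus p)(t)\,e_p(c,t)$. To bring this into the form that appears in the lemma I would introduce a $\sigma$-shift: applying $f^\sigma=f+\mu f^\Delta$ to $f=e_p(\cdot,c)$ yields $e_p(\sigma(t),c)=(1+\mu(t)p(t))\,e_p(t,c)$, and taking reciprocals (Theorem~\ref{Theorem3}) gives $e_p(c,\sigma(t))=\frac{1}{1+\mu(t)p(t)}\,e_p(c,t)$. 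Combining this with $(\ominus p)(t)=\frac{-p(t)}{1+\mu(t)p(t)}$ produces $\big(e_p(c,\cdot)\big)^\Delta(t)=-p(t)\,e_p(c,\sigma(t))$, whence
\begin{equation*}
\int_a^b p(t)\,e_p(c,\sigma(t))\,\Delta t=-\int_a^b\big(e_p(c,\cdot)\big)^\Delta(t)\,\Delta t=-\big(e_p(c,b)-e_p(c,a)\big)=e_p(c,a)-e_p(c,b).
\end{equation*}

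I expect no serious obstacle: the one point to handle with care is the bookkeeping in the second part — checking that regressivity of $\ominus p$ legitimizes $e_{\ominus p}$ and that the factor $1+\mu(t)p(t)$ cancels correctly once the $\sigma$-shift is introduced — after which both statements collapse to a one-line use of the fundamental theorem of calculus.
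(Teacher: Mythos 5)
Your proof is correct. The paper itself gives no proof of this lemma --- it is quoted directly from Bohner and Peterson's book --- and your argument (recognizing $p(t)e_p(t,c)$ as the delta derivative of $e_p(\cdot,c)$, and showing via the $\sigma$-shift identity $e_p(\sigma(t),c)=(1+\mu(t)p(t))e_p(t,c)$ together with $(\ominus p)(t)=\frac{-p(t)}{1+\mu(t)p(t)}$ that $-p(t)e_p(c,\sigma(t))$ is the delta derivative of $e_p(c,\cdot)$, then applying the fundamental theorem of delta calculus) is exactly the standard argument behind the cited result.
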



\section{Dynamic SIR Model}

In this section, we formulate a dynamic epidemic model based on Bailey's 
classical differential system \eqref{Bailey1} and derive its exact solution. 
In the special case of a discrete time domain, this provides a novel model 
as a discrete analogue of the continuous system. We end the discussion by 
analyzing the stability of the solutions to the dynamic model 
in the case of constant coefficients. 

Consider the dynamic susceptible-infected-removed model of the form
\begin{equation}
\label{SIRT1}
\begin{cases}
x^{\Delta}  = -\frac{b(t)xy^{\sigma}}{x+y},& \quad \\[2mm]
y^{\Delta}  = \frac{b(t)xy^{\sigma}}{x+y} - c(t)y^{\sigma},& \quad \\[2mm]
z^{\Delta}  = c(t)y^{\sigma},& \quad\\
x,y >0,&\\
\end{cases}
\end{equation}
with given initial conditions $x(t_0)=x_0>0$, $y(t_0)=y_0> 0$, $z(t_0)=z_0\geq 0$, 
$x, y: \mathbb{T} \to \mathbb{R}^+$, $z: \mathbb{T} \to \mathbb{R}_0^+$, 
and $b, c: \mathbb{T} \to \mathbb{R}_0^+$.

\begin{theorem}
\label{Thm1}
If $c-b,g \in \mathcal{R}$, then the unique solution 
to the IVP \eqref{SIRT1} is given by 
\begin{equation*}
\begin{cases}
x(t) = e_{\ominus g}(t,t_0)x_0,& \quad \\
y(t) = e_{\ominus (g \oplus (c-b))}(t,t_0)y_0,& \quad \\
z(t) = N - e_{\ominus g}(t,t_0) \left( x_0 
+ y_0 e_{\ominus (c-b)}(t,t_0) \right),\\
\end{cases}
\end{equation*}
where $N=x_0+y_0+z_0$, $\kappa= \frac{y_0}{x_0}$, and 
\begin{equation*}
g(t) := \frac{b(t)\kappa }{\kappa (1+\mu(t)(c-b)(t))+ e_{c-b}(\sigma(t),t_0)}.
\end{equation*}
\end{theorem}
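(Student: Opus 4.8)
The plan is to follow the continuous-time strategy of Section~2, transcribed into the time-scale calculus: collapse the coupled system to two \emph{scalar linear} dynamic equations by introducing the quotient $w:=x/y$, solve these with the exponential function, and then recover $z$ from the conservation law $x+y+z\equiv N$. All computations take place on the set where $x,y>0$, so that $w$ is well defined and none of the denominators $x+y$, $y$, $y^{\sigma}$ vanishes.

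First I would differentiate $w=x/y$ with the time-scale quotient rule $(x/y)^{\Delta}=(x^{\Delta}y-xy^{\Delta})/(yy^{\sigma})$. Substituting the first two equations of \eqref{SIRT1}, the numerator simplifies --- after cancelling the common factor $x+y$ --- to $(c-b)(t)\,xy^{\sigma}$, and the surviving $y^{\sigma}$ cancels against the denominator, leaving the genuinely linear equation $w^{\Delta}=(c-b)(t)\,w$ with no $w^{\sigma}$ term; this is precisely why $x/y$, rather than $y/x$, is the quantity to track. Since $c-b\in\mathcal{R}$, the IVP $w^{\Delta}=(c-b)(t)w$, $w(t_0)=x_0/y_0=1/\kappa$, has the unique solution $w(t)=\tfrac1\kappa\,e_{c-b}(t,t_0)$, which --- using the reciprocal property $e_{\ominus p}=1/e_p$ --- rearranges to $y(t)=\kappa\,e_{\ominus(c-b)}(t,t_0)\,x(t)$, and hence $y^{\sigma}=\kappa\,e_{\ominus(c-b)}(\sigma(t),t_0)\,x^{\sigma}$.

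Next I would substitute these two relations into the $x$-equation of \eqref{SIRT1}. Writing $x+y=x\big(1+\kappa e_{\ominus(c-b)}(t,t_0)\big)$, the factor $x$ cancels and one obtains $x^{\Delta}=-g(t)x^{\sigma}$ with $g=b\kappa\,e_{\ominus(c-b)}(\sigma(t),t_0)\big/\big(1+\kappa e_{\ominus(c-b)}(t,t_0)\big)$. The one genuinely computational point is to check that this $g$ coincides with the one in the statement: using $e_{\ominus(c-b)}(\sigma(t),t_0)=1/e_{c-b}(\sigma(t),t_0)$, $e_{\ominus(c-b)}(t,t_0)=1/e_{c-b}(t,t_0)$, and $e_{c-b}(\sigma(t),t_0)=(1+\mu(t)(c-b)(t))\,e_{c-b}(t,t_0)$, clearing denominators turns $g$ into $b(t)\kappa\big/\big(\kappa(1+\mu(t)(c-b)(t))+e_{c-b}(\sigma(t),t_0)\big)$, exactly the stated expression. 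Since $g\in\mathcal{R}$, the second variation-of-constants formula (with zero forcing) gives the unique solution $x(t)=e_{\ominus g}(t,t_0)x_0$, whence $y(t)=\kappa e_{\ominus(c-b)}(t,t_0)x(t)=y_0\,e_{\ominus(c-b)}(t,t_0)\,e_{\ominus g}(t,t_0)=y_0\,e_{\ominus(g\oplus(c-b))}(t,t_0)$, using $e_{\ominus p}=1/e_p$, the product rule $e_pe_q=e_{p\oplus q}$, and commutativity of $\oplus$.

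Finally, adding the three equations of \eqref{SIRT1} yields $(x+y+z)^{\Delta}=0$, so $x+y+z\equiv x_0+y_0+z_0=N$ and $z(t)=N-x(t)-y(t)=N-e_{\ominus g}(t,t_0)\big(x_0+y_0\,e_{\ominus(c-b)}(t,t_0)\big)$, as claimed. Uniqueness is built into the argument: any solution of \eqref{SIRT1} with $x,y>0$ forces $w$, then $x$, then $y$ and $z$ to be exactly these functions, and each scalar IVP appearing along the way has a unique solution under the stated regressivity hypotheses; existence follows by running the reduction backwards (or by a direct substitution check). I expect the only real obstacle to be the algebraic identity matching the two expressions for $g$; everything else is a faithful transcription of the constant-coefficient computation, powered by the exponential-function identities from Section~3.
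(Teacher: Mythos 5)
Your proposal is correct and follows essentially the same route as the paper: reduce via $w=x/y$ to $w^{\Delta}=(c-b)w$, substitute $y=\kappa e_{\ominus(c-b)}(\cdot,t_0)x$ into the $x$-equation to get $x^{\Delta}=-g(t)x^{\sigma}$, solve with the time-scale exponential, and recover $z$ from the conservation law. Your explicit verification that the two expressions for $g$ agree (via $e_{c-b}(\sigma(t),t_0)=(1+\mu(t)(c-b)(t))e_{c-b}(t,t_0)$) is a detail the paper leaves implicit, but otherwise the arguments coincide.
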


\begin{proof}
Assume that $x,y,z$ solve \eqref{SIRT1}. Since $(x+y+z)^{\Delta}=0$, 
we get $z=N - (x+y)$, where $N=x_0+y_0+z_0$. 
Defining $w := \frac{x}{y}$, we have
\begin{equation*}
w^{\Delta}=\frac{x^{\Delta} y - y^{\Delta}x}{yy^{\sigma}}
=\frac{\frac{-b(t)xy^{\sigma}}{x+y}y - \left( \frac{b(t)xy^{\sigma}}{x+y}
-c(t)y^{\sigma}\right)x}{yy^{\sigma}}
=\frac{-b(t)xy^{\sigma} + c(t)xy^{\sigma}}{yy^{\sigma}}=(c-b)(t)w,
\end{equation*}
which is a first-order linear dynamic equation with solution  
\begin{equation*}
w(t)=e_{c-b}(t,t_0)w_0,
\end{equation*}
i.e., 
\begin{equation}
\label{Eq3_B}
y(t) = \kappa e_{\ominus (c-b)}(t,t_0) x(t).
\end{equation}
We plug \eqref{Eq3_B} into \eqref{SIRT1} to get
\begin{equation*}
x^{\Delta}  = -\frac{b(t)xx^{\sigma}e_{\ominus (c-b)}(\sigma(t),t_0)\kappa}{x
+xe_{\ominus (c-b)}(t,t_0)\kappa}= -\frac{b(t)e_{\ominus (c-b)}(\sigma(t),
t_0)\kappa}{1+e_{\ominus (c-b)}(t,t_0)\kappa}x^{\sigma}
= - g(t) x^{\sigma},
\end{equation*}
which has the solution
\begin{equation*}
x(t) = e_{\ominus g}(t,t_0)x_0.
\end{equation*}
By \eqref{Eq3_B}, we obtain 
\begin{equation*}
y(t)= y_0e_{\ominus (g \oplus (c-b))}(t,t_0),
\end{equation*}
and thus,
\begin{equation*}
z(t) = N-x(t)-y(t)=N - e_{\ominus g}(t,t_0) 
\left( x_0 + y_0  e_{\ominus (c-b)}(t,t_0)  \right).
\end{equation*}
This shows that $x,y,z$ are as given in the statement. Conversely, 
it is easy to show that $x,y,z$ as given in the statement solve \eqref{SIRT1}.
The proof is complete.
\end{proof}

\begin{remark}
If $c-b \in \mathcal{R}^+$ and $x_0>0$, $y_0,z_0 \geq 0$, then $x,y,z \geq 0$ 
for all $t \in \mathbb{T}$. For $\mathbb{T}=\mathbb{R}$, this condition 
is satisfied, since $\mu(t)=0$ for all $t \in \mathbb{R}$.
\end{remark}

\begin{remark}
If $b(t)=c(t)$ for all $t \in \mathbb{T}$, then $c-b \in \mathcal{R}$, 
and, by Theorem~\ref{Thm1}, the solution of \eqref{SIRT1} is  
\begin{equation*}
\begin{cases}
x(t) = e_{\ominus g}(t,t_0)x_0,& \quad \\
y(t) = e_{\ominus g}(t,t_0)y_0,& \quad \\
z(t) = N - e_{\ominus g}(t,t_0) \left( x_0 + y_0 \right),& \quad
\end{cases}
\end{equation*}
where
\begin{equation*}
g(t) = \frac{b(t)\kappa}{1 + \kappa}.
\end{equation*}
\end{remark}

As an application of Theorem~\ref{Thm1}, we introduce the discrete epidemic model
\begin{equation}
\begin{cases}
\label{SIRZ1}
x(t+1)  =x(t) -\frac{b(t) x(t) y(t+1)}{x(t)+y(t)},& \quad \\[2mm]
y(t+1) = y(t) + \frac{b(t) x(t) y(t+1)}{x(t)+y(t)} - c(t) y(t+1),& \quad \\[2mm]
z(t+1)  = z(t) + c(t) y(t+1),& \quad
\end{cases}
\end{equation}
$t \in \mathbb{Z}$, with initial conditions 
$x(t_0)=x_0>0$, $y(t_0)=y_0>0$, $z(t_0)=z_0\geq 0$.
Note that the second equation of \eqref{SIRZ1} can be represented as
$$
y(t+1)  = \frac{1}{1+\delta(t)} y(t),
$$
which implies that a fraction, namely $\frac{1}{1+\delta}$, of the infected 
individuals remain infected. If the rate with which susceptibles are getting 
infected is higher than the rate with which infected are removed, i.e., 
$\varphi=b\frac{x}{x+y}>c$, then the multiplicative factor $\delta= c-\varphi$ 
is greater than one, else less than one. Slightly rewriting the first equation 
into the form 
$$
x(t+1) +\varphi(t) y(t+1)  = x(t)
$$
provides the interpretation that some susceptible individuals stay 
in the group of susceptibles, others become infected and contribute 
the fraction $\varphi$ to the group of infected. A similar inference 
can be drawn from 
$$
z(t+1)= z(t)+c(t)y(t+1).
$$
The number of removed individuals is the sum of the already removed individuals 
and a proportion of infected individuals that are removed 
at the end of the time step. 

The following theorem is a direct consequence of Theorem~\ref{Thm1}. 

\begin{theorem}
\label{Thm1Z}
If $1+c(t)-b(t), 1+g(t) \neq 0$ for all $t \in \mathbb{Z}$, where
\begin{equation*}
g(t) = \frac{b(t) \kappa }{\left[\prod_{i=t_0}^{t} 
(1+(c-b)(i))\right]+\kappa (1+(c-b)(t))},
\end{equation*}
then the unique solution to \eqref{SIRZ1} is given by
\begin{equation*}
\begin{cases}
x(t) = x_0\left[\prod_{i=t_0}^{t-1} \left(1+g(i)\right)\right]^{-1}, & \quad \\[2mm]
y(t) = y_0\left[\prod_{i=t_0}^{t-1} (1+(c-b)(i))(1+g(i))\right]^{-1}, & \quad \\[2mm]
z(t) = N -\left(x_0+y_0\left[\prod_{i=t_0}^{t-1} (1+(c-b)(i))\right]^{-1}\right)
\left[\prod_{i=t_0}^{t-1} \left(1+g(i)\right)\right]^{-1}, & 
\end{cases}
\end{equation*}
where $N=x_0+y_0+z_0$ and $\kappa= \frac{y_0}{x_0}$.
\end{theorem}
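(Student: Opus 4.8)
The plan is to read Theorem~\ref{Thm1Z} as nothing more than the evaluation of Theorem~\ref{Thm1} on the particular time scale $\mathbb{T}=\mathbb{Z}$, and to carry out the translation of each time-scales object into its discrete form. First I would note that on $\mathbb{Z}$ one has $\sigma(t)=t+1$ and $\mu(t)\equiv 1$, so that $y^{\sigma}(t)=y(t+1)$ and $f^{\Delta}(t)=f(t+1)-f(t)$; substituting these into \eqref{SIRT1} turns the dynamic system verbatim into the difference system \eqref{SIRZ1}. Every function on $\mathbb{Z}$ is rd-continuous, so of the standing hypotheses of Theorem~\ref{Thm1} only the regressivity conditions $c-b,g\in\mathcal{R}$ carry content; on $\mathbb{Z}$ they amount to $1+\mu(t)(c-b)(t)=1+c(t)-b(t)\neq 0$ and $1+\mu(t)g(t)=1+g(t)\neq 0$ for all $t$, together with the implicit requirement that the denominator defining $g$ not vanish. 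These are precisely the hypotheses of Theorem~\ref{Thm1Z}.

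Second, I would unwind the abbreviation $g$. On $\mathbb{Z}$ the exponential function is the finite product $e_p(t,t_0)=\prod_{i=t_0}^{t-1}\bigl(1+p(i)\bigr)$ for $t\ge t_0$, extended to $t<t_0$ by the backward convention $e_p(t,t_0)=\prod_{i=t}^{t_0-1}\bigl(1+p(i)\bigr)^{-1}$; in particular $e_{c-b}(\sigma(t),t_0)=e_{c-b}(t+1,t_0)=\prod_{i=t_0}^{t}\bigl(1+(c-b)(i)\bigr)$. Inserting $\mu(t)=1$ and this product into the formula $g(t)=b(t)\kappa\bigl(\kappa(1+\mu(t)(c-b)(t))+e_{c-b}(\sigma(t),t_0)\bigr)^{-1}$ of Theorem~\ref{Thm1} yields exactly the expression for $g$ stated in Theorem~\ref{Thm1Z}.

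Third, I would convert the three closed forms. On $\mathbb{Z}$ the circle-minus operation is $\ominus g=\tfrac{-g}{1+g}$, so $1+(\ominus g)(i)=(1+g(i))^{-1}$ and hence $e_{\ominus g}(t,t_0)=\prod_{i=t_0}^{t-1}(1+g(i))^{-1}=\bigl[\prod_{i=t_0}^{t-1}(1+g(i))\bigr]^{-1}$, which is the claimed $x(t)$. For $y(t)$ I would apply part~a) of the corollary on the exponential function to write $e_{\ominus(g\oplus(c-b))}(t,t_0)=\bigl(e_g(t,t_0)\,e_{c-b}(t,t_0)\bigr)^{-1}$ and then expand both exponentials as finite products; for $z(t)$ the same bookkeeping applied to $e_{\ominus g}(t,t_0)\bigl(x_0+y_0\,e_{\ominus(c-b)}(t,t_0)\bigr)$, using $e_{\ominus(c-b)}(t,t_0)=\bigl[\prod_{i=t_0}^{t-1}(1+(c-b)(i))\bigr]^{-1}$. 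Uniqueness of the solution is inherited directly from Theorem~\ref{Thm1}, since \eqref{SIRZ1} is literally \eqref{SIRT1} on $\mathbb{Z}$.

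There is no genuinely hard step here: the statement is a pure specialization, already advertised as ``a direct consequence of Theorem~\ref{Thm1}''. The only points that require care are the index ranges in the finite products — the product inside $g$ runs up to $t$ whereas those in $x$, $y$, $z$ run up to $t-1$, reflecting the shift by $\sigma$ — and the boundary cases $t\le t_0$, where the product notation must be read via the backward convention above; verifying that the displayed formulas remain valid there, and that the divisions by $1+g(t)$ and by $1+(c-b)(t)$ are licit exactly under the stated hypotheses, is the full extent of the work.
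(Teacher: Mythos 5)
Your proposal is correct and matches the paper's treatment: the paper offers no separate argument beyond declaring the result ``a direct consequence of Theorem~\ref{Thm1}'', and your write-up is exactly that specialization, with $\sigma(t)=t+1$, $\mu\equiv 1$, $e_p(t,t_0)=\prod_{i=t_0}^{t-1}(1+p(i))$, and the $\ominus$/$\oplus$ identities correctly unwound into the stated products and hypotheses.
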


\begin{example}
Consider a disease with periodic transmission rate, for example due to sensitivity 
of bacteria to temperature or hormonal cycles. In this case, we might choose 
$b(t)=\frac{1}{2}+\frac{1}{4}\sin(mt)$ with $m\in \mathbb{R}\backslash \{0\}$. 
To account for medical advances, we let $c(t)=\frac{1}{t+1}$. Note that 
$1+c(t)\neq b(t)$ because $\frac{1}{4}\leq b(t) \leq \frac{3}{4}<1$ 
and $1+g(t) \neq 0$ for all $t \in \mathbb{Z}$. 
The solution is then given by Theorem~\ref{Thm1Z} with 
\begin{equation*}
g(t) =  \frac{ 2+\sin(mt)  }{\left[\frac{4}{\kappa}\prod_{i=t_0}^{t} 
\frac{3+i}{2(1+i)}-\frac{1}{4} \sin(m  i)\right]+\frac{2(3+t)}{1+t}-\sin(mt)}.
\end{equation*}
\end{example}

\begin{remark}
If $\mathbb{T}=\mathbb{R}$, $b,c \in \mathbb{R}$, $b \neq c$, and $t_0=0$, 
then, by Theorem~\ref{Thm1}, the solution to \eqref{SIRT1} is 
\begin{align*}
x(t) &= x_0e^{-\int_0^t g(s) \, {\rm d} s}=x_0e^{-b \int_0^t 
\frac{\kappa e^{-(c-b)s}}{ 1+ \kappa e^{-(c-b)s}} \, {\rm d}s}
=x_0e^{-\frac{b}{b-c} \left(\ln (1+ \kappa e^{-(c-b)t}) - \ln (1+ \kappa) \right)}\\
&= x_0\left( 1+ \kappa e^{-(c-b)t} \right)^{-\frac{b}{b-c}} 
\left(1+ \kappa \right)^{\frac{b}{b-c}},
\end{align*}
which is consistent with \eqref{Eq1_B}. If $c=b$, then Theorem~\ref{Thm1} 
provides the solution as 
\begin{equation*}
x(t) = x_0 e^{-\int_0^t g(s) \, ds}=x_0e^{-b \int_0^t 
\frac{\kappa }{ 1+ \kappa } \, ds}=x_0 e^{\frac{-b \kappa}{1+\kappa}t},
\end{equation*}
which is consistent with the continuous results.
\end{remark}

\begin{example}
\label{ex:19}
Let us consider the SIR model \eqref{SIRT1} with
\begin{equation*}
b = 0.4, \quad c = 0.2, \quad x_0 = 0.8, \quad y_0 = 0.2, \quad z_0 = 0.
\end{equation*}
\begin{center}
\begin{figure}
\begin{subfigure}{.5\textwidth}
\centering
\includegraphics[width=.9\linewidth]{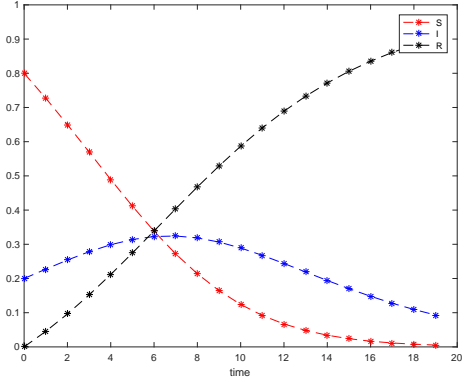}
\captionsetup{width=0.48\textwidth}
\centering
\caption{$\mathbb{T}=\mathbb{Z}$}
\label{fig:1a}
\end{subfigure}%
\begin{subfigure}{.5\textwidth}
\includegraphics[width=.9\linewidth]{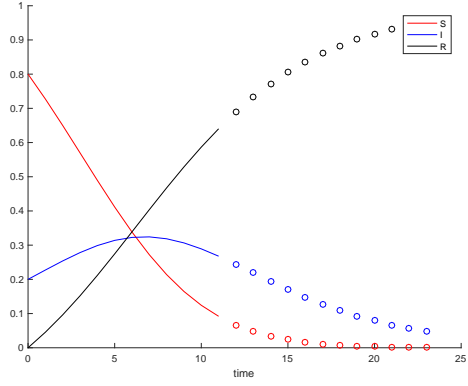}
\captionsetup{width=0.48\textwidth}
\centering
\caption{$\mathbb{T}=[0,12]\cup \{13,14,15,\ldots, 24\}$}
\label{fig:1b}
\end{subfigure}
\captionsetup{width=0.8\textwidth}
\centering
\caption{the $x$=Susceptible ($S$), $y$=Infected ($I$), and $z$=Removed ($R$) 
dynamics of Example~\ref{ex:19}.}
\label{example1}
\end{figure}
\end{center}
In Figure~\ref{fig:1a}, we show the solution in the discrete-time case $\mathbb{T}=\mathbb{Z}$ 
determined by \eqref{SIRZ1}; in Figure~\ref{fig:1b}, we plot the solution to \eqref{SIRT1} 
for the partial continuous, partial discrete time scale $\mathbb{T}=[0,12]\cup \{13,\ldots,24\}$.
\end{example}


\section{Long Term Behavior}

We start this section by recalling the following results. 

\begin{lemma}[See \protect{\cite[Lemma 3.2]{MR2842561}}]
\label{AddBL1}
If $p \in \mathcal{R}^+$, then 
\begin{equation*}
0 < e_p(t,t_0) \leq \exp \left\{ \int_{t_0}^t p(\tau) \Delta \tau \right \}
\quad \quad \mbox{for all } t \geq t_0.
\end{equation*}
\end{lemma}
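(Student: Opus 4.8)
The plan is to split the two inequalities. Strict positivity $0 < e_p(t,t_0)$ requires nothing new: since $p \in \mathcal{R}^+ \subseteq \mathcal{R} \subseteq \crd$, the positivity result recalled above (\cite[Theorem 2.44]{MR1843232}) already gives $e_p(t,t_0) > 0$ for every $t \in \mathbb{T}$. The content is thus the upper estimate, and I would prove it by a comparison argument on the time scale using
\[
u(t) := \exp\left\{\int_{t_0}^t p(\tau)\,\Delta\tau\right\}, \qquad t \geq t_0 .
\]
Writing $P(t) := \int_{t_0}^t p(\tau)\,\Delta\tau$, so that $P$ is delta-differentiable with $P^\Delta = p$ (as $p$ is rd-continuous), the chain rule for the delta derivative (\cite{MR1843232}) applied to $u = \exp\circ P$ yields
\[
u^\Delta(t) = \left(\int_0^1 \exp\!\big(P(t)+h\mu(t)p(t)\big)\,dh\right)p(t).
\]
If $\mu(t) = 0$, this is $u^\Delta(t) = p(t)u(t)$; if $\mu(t) > 0$, the integral equals $u(t)\,\tfrac{e^{\mu(t)p(t)}-1}{\mu(t)p(t)}$ when $p(t)\neq 0$ (and $u^\Delta(t)=0=p(t)u(t)$ when $p(t)=0$), so $u^\Delta(t) = u(t)\,\tfrac{e^{\mu(t)p(t)}-1}{\mu(t)}$. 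Since $e^x \geq 1+x$ for all real $x$ and $u > 0$, in every case $u^\Delta(t) \geq p(t)u(t)$, with $u(t_0) = 1 = e_p(t_0,t_0)$; that is, $u$ is a supersolution of $y^\Delta = p(t)y$.

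The comparison step then goes as follows. Put $d := u - e_p(\cdot,t_0)$, so $d(t_0) = 0$ and, using $e_p^\Delta(\cdot,t_0) = p\,e_p(\cdot,t_0)$, we get $d^\Delta \geq pu - p\,e_p(\cdot,t_0) = p\,d$. By the quotient rule for the delta derivative,
\[
\left(\frac{d(t)}{e_p(t,t_0)}\right)^\Delta = \frac{d^\Delta(t) - p(t)d(t)}{e_p(\sigma(t),t_0)} \geq 0,
\]
because the numerator is nonnegative and $e_p(\sigma(t),t_0) > 0$, again by \cite[Theorem 2.44]{MR1843232}. Hence $t \mapsto d(t)/e_p(t,t_0)$ is nondecreasing on $[t_0,\infty)$ and vanishes at $t_0$, so it is nonnegative there; since $e_p(t,t_0) > 0$, this forces $d(t) \geq 0$, i.e., $e_p(t,t_0) \leq u(t)$ for all $t \geq t_0$, which is the assertion.

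I expect the only genuinely delicate point to be the graininess-dependent computation of $u^\Delta$: away from $\mathbb{T}=\mathbb{R}$ one must go through the chain rule and then reduce the resulting estimate to the scalar inequality $e^x \geq 1+x$; the rest is a standard comparison. Alternatively, recalling the cylinder-transformation representation $e_p(t,t_0) = \exp\{\int_{t_0}^t \xi_{\mu(\tau)}(p(\tau))\,\Delta\tau\}$, where $\xi_h(z) = h^{-1}\log(1+hz)$ for $h>0$ and $\xi_0(z)=z$, the bound is immediate from $\xi_h(z) \leq z$ ---itself just $1+hz \leq e^{hz}$--- together with monotonicity of the integral and of $\exp$; but this machinery is not developed in the present excerpt.
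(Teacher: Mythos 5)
Your proof is correct. Note first that the paper does not prove this lemma at all: it is recalled from the literature (Lemma 3.2 of the cited Bohner--Guseinov--Karpuz paper), where the standard argument is exactly the cylinder-transformation one you sketch at the end --- from $e_p(t,t_0)=\exp\{\int_{t_0}^t \xi_{\mu(\tau)}(p(\tau))\,\Delta\tau\}$ and $\xi_h(z)\leq z$ (i.e.\ $\log(1+hz)\leq hz$, valid since $1+\mu p>0$ for $p\in\mathcal{R}^+$), the bound is immediate, and positivity also falls out of the same representation. Your main argument takes a genuinely different route: you verify via P\"otzsche's chain rule that $u=\exp\{\int_{t_0}^{\cdot}p\}$ satisfies $u^\Delta\geq pu$ (the case analysis in $\mu(t)$ and the reduction to $e^x\geq 1+x$ are handled correctly, and importantly the estimate does not require $p\geq 0$, only $p\in\mathcal{R}^+$), and then run a supersolution comparison: $d=u-e_p(\cdot,t_0)$ satisfies $d^\Delta\geq pd$, the quotient rule gives $(d/e_p(\cdot,t_0))^\Delta=(d^\Delta-pd)/e_p(\sigma(\cdot),t_0)\geq 0$, and positivity of $e_p$ (Theorem 2.44, as you say) turns monotonicity plus $d(t_0)=0$ into $d\geq 0$ on $[t_0,\infty)$. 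All steps are valid; the only unreferenced ingredients are the chain rule and the standard fact that a nonnegative delta derivative implies a nondecreasing function, both in Bohner--Peterson. What each approach buys: the cylinder route is a one-line proof but needs machinery not developed in this paper, while yours is longer but elementary relative to what the paper recalls, and as a by-product establishes the general comparison principle for $y^\Delta=p(t)y$ with $p\in\mathcal{R}^+$, which is of independent use.
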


\begin{lemma}[See \protect{\cite[Remark 2]{MR2145447}}]
\label{AddBL2}
If $p\in {\rm C}_{\rm rd}$ and $p(t)\geq 0$ for all $t \in \mathbb{T}$, then 
\begin{equation*}
1 + \int_{t_0}^t p(\tau) \Delta \tau \leq e_{p}(t,t_0) \leq \exp 
\left \{ \int_{t_0}^t p(\tau) \Delta \tau \right \} 
\quad \quad \mbox{for all } t \geq t_0.
\end{equation*}
\end{lemma}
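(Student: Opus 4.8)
The plan is to treat the two inequalities separately, and almost all the work is in the lower bound. First note that $p\in\crd$ with $p\geq 0$, together with $\mu\geq 0$, gives $1+\mu(t)p(t)\geq 1>0$ for all $t$, so $p\in\mathcal{R}^+$; in particular the theorem on positivity of the exponential quoted above yields $e_p(t,t_0)>0$ for all $t\in\mathbb{T}$, and $\int_{t_0}^t p(\tau)\,\Delta\tau$ makes sense. The upper estimate $e_p(t,t_0)\leq\exp\{\int_{t_0}^t p(\tau)\,\Delta\tau\}$ is then immediate from Lemma~\ref{AddBL1}, since $p\in\mathcal{R}^+$; no further argument is needed for this half.

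For the lower estimate I would use a monotonicity-plus-integration argument. Set $h:=e_p(\cdot,t_0)$, so $h^\Delta=p\,h$ on $\mathbb{T}^\kappa$ and $h(t_0)=1$. Because $p(t)\geq 0$ and $h(t)>0$, we get $h^\Delta(t)=p(t)h(t)\geq 0$ for $t\geq t_0$, hence $h$ is nondecreasing on $[t_0,\infty)\cap\mathbb{T}$ and $h(t)\geq h(t_0)=1$ there. Substituting this back into the dynamic equation gives the pointwise bound $h^\Delta(t)=p(t)h(t)\geq p(t)$ for $t\geq t_0$. Integrating from $t_0$ to $t$ (the integrand $h^\Delta=p\,h$ is rd-continuous, so the fundamental theorem of the time scales calculus applies) yields $h(t)-1=\int_{t_0}^t h^\Delta(\tau)\,\Delta\tau\geq\int_{t_0}^t p(\tau)\,\Delta\tau$, which rearranges to $1+\int_{t_0}^t p(\tau)\,\Delta\tau\leq e_p(t,t_0)$.

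I do not expect a genuine obstacle. The only point requiring care is the step $h\geq 1$, which leans on the positivity $e_p(\cdot,t_0)>0$ — itself a consequence of $p\in\mathcal{R}^+$ — and on the sign of $p$; once that is in hand, each inequality costs a single line (the upper one by citation, the lower one by one integration). If one preferred not to invoke Lemma~\ref{AddBL1}, the upper bound could alternatively be derived from the scalar inequality $1+\mu(t)p(t)\leq e^{\mu(t)p(t)}$ combined with the semigroup property of $e_p$, but citing the already-available lemma is the shorter route.
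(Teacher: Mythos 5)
Your proof is correct. Note, however, that the paper does not prove Lemma~\ref{AddBL2} at all: it is imported verbatim with a citation (Remark~2 of the quoted reference), just like Lemma~\ref{AddBL1}, so there is no internal proof to compare against. Your self-contained argument is the standard one and each step checks out: $p\geq 0$ and $\mu\geq 0$ give $1+\mu(t)p(t)\geq 1>0$, so $p\in\mathcal{R}^+$, the upper bound is exactly Lemma~\ref{AddBL1}, and $e_p(t,t_0)>0$ holds by the positivity theorem; then $h:=e_p(\cdot,t_0)$ satisfies $h^\Delta=ph\geq 0$, so integrating the nonnegative rd-continuous function $h^\Delta$ shows $h$ is nondecreasing and hence $h\geq h(t_0)=1$ on $[t_0,\infty)\cap\mathbb{T}$ (no circularity, since positivity of $h$ comes from the quoted theorem, not from monotonicity), which gives $h^\Delta=ph\geq p$ and, after one more integration, $e_p(t,t_0)\geq 1+\int_{t_0}^t p(\tau)\,\Delta\tau$. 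An equivalent packaging of the same idea is to use the integral equation $e_p(t,t_0)=1+\int_{t_0}^t p(\tau)e_p(\tau,t_0)\,\Delta\tau$ together with $e_p(\tau,t_0)\geq 1$, but this buys nothing over your version.
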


The equilibriums of \eqref{SIRT1} are given as follows. 

\begin{lemma}
\label{AddBL3}
Suppose $c(t) >0$ for some $t \in \mathbb{T}$. The equilibriums of \eqref{SIRT1} 
are given by the plane $(\alpha, 0, N-\alpha)$, where $\alpha \in [0,N]$ and $N=x_0+y_0+z_0$.
\end{lemma}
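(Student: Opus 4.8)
\textbf{Proof plan for Lemma~\ref{AddBL3}.}
The plan is to characterize the constant solutions of the dynamic system \eqref{SIRT1} directly. First I would observe that an equilibrium $(x,y,z)$ is by definition a constant (time-independent) solution, so that $x^\Delta = y^\Delta = z^\Delta = 0$ identically on $\mathbb{T}^\kappa$, and for a constant function one has $y^\sigma = y$. Substituting into the three equations of \eqref{SIRT1} gives the algebraic system $\frac{b(t)xy}{x+y} = 0$, $\frac{b(t)xy}{x+y} - c(t)y = 0$, and $c(t)y = 0$, to hold for all $t \in \mathbb{T}^\kappa$. Note these must be read with the standing constraint $x,y > 0$ of \eqref{SIRT1} relaxed at equilibria, since otherwise no equilibrium exists; more precisely, I would allow $x,y \ge 0$ and treat the fraction $\frac{xy}{x+y}$ as $0$ whenever $x = 0$ or $y = 0$, matching the vanishing of the flux in those degenerate states.

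Next I would use the hypothesis that $c(t_*) > 0$ for some $t_* \in \mathbb{T}$. From the third equation $c(t)y = 0$ evaluated at $t_*$ we get $y = 0$. Conversely, once $y = 0$, all three right-hand sides of \eqref{SIRT1} vanish identically regardless of $t$, $b$, $c$, and $x$, so $(x, 0, z)$ is indeed a constant solution for any $x, z \ge 0$. Finally I would impose the conservation law: since $(x+y+z)^\Delta = 0$ for any solution of \eqref{SIRT1}, any equilibrium reached from initial data with total $N = x_0 + y_0 + z_0$ must satisfy $x + 0 + z = N$, hence $z = N - x$; writing $\alpha := x$, the equilibrium set is exactly $\{(\alpha, 0, N - \alpha) : \alpha \in [0,N]\}$, the range of $\alpha$ being forced by $x \ge 0$ and $z = N - \alpha \ge 0$.

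I expect the only genuinely delicate point to be the careful handling of the domain restriction $x, y > 0$ built into \eqref{SIRT1}: strictly speaking the system as written has \emph{no} equilibria in its stated domain, so the lemma is implicitly about equilibria of the natural extension of the vector field to the closed positive orthant (or equivalently about limit points of solutions). I would make this explicit in one sentence, noting that the flux $\frac{b(t)xy^\sigma}{x+y}$ extends continuously by zero to the coordinate planes $x = 0$ and $y = 0$, and that with this extension the computation above is valid. Everything else is a routine substitution-and-simplification argument; no time-scales machinery beyond the identity $y^\sigma = y$ for constant $y$ and the already-established conservation of $N$ is needed.
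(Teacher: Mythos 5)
Your proposal is correct and follows essentially the same route as the paper's proof: impose constancy, use $c(t_*)>0$ in the third equation to force $y=0$, and observe that the remaining equations then hold for any $0\le\alpha\le N$ with $z=N-\alpha$ by conservation of $N$. Your extra remarks (the converse verification and the explicit extension of the flux $\frac{b(t)xy^{\sigma}}{x+y}$ by zero to the boundary $y=0$, which the standing assumption $x,y>0$ in \eqref{SIRT1} formally excludes) only make explicit points the paper's proof passes over silently.
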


\begin{proof}
Assume $x,y,z$ are constant solutions of \eqref{SIRT1}. 
Then, $0=z^{\Delta}(t)=c(t)y(t)$, 
so $y(t)=0$ for all $t\in \mathbb{T}$. Therefore,
\begin{equation*}
-\frac{b(t)xy}{x+y} = x^{\Delta}=0=y^{\Delta}
=\frac{b(t)xy}{x+y}-c(t)y \quad \mbox{ for any } 0 \leq x \leq N
\end{equation*}
and the proof is complete.
\end{proof}

\begin{theorem}
\label{AddBT1}
Consider \eqref{SIRT1} and assume $\mathbb{T}$ is unbounded from above. 
Assume $b,c: \mathbb{T} \to \mathbb{R}_0^+$, $c-b \in \mathcal{R}^+$, 
$x_0, y_0 >0$, and $z_0 \geq 0$. Moreover, assume 
\begin{equation}
\label{ProofBEq1}
\exists \, L >0: \, \int_{t_0}^t (c-b)(\tau) \, \Delta \tau \leq L 
\quad \quad \mbox{ for all } t \geq t_0
\end{equation}
and 
\begin{equation}
\label{ProofBEq2}
\int_{t_0}^{\infty} \frac{b(\tau)}{1+\mu(\tau)(c-b)(\tau)} 
\, \Delta \tau = \infty.
\end{equation}
Then all solutions of \eqref{SIRT1} converge to the equilibrium 
$(0,0,N)$, where $N=x_0+y_0+z_0$.
\end{theorem}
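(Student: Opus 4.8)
The plan is to work from the explicit solution furnished by Theorem~\ref{Thm1} and show that each coordinate converges to the claimed limit. Since $c-b\in\mathcal R^+$ and (by the Remark following Theorem~\ref{Thm1}) this forces $x,y,z\ge 0$, it suffices to prove $x(t)\to 0$; then \eqref{Eq3_B} gives $y(t)=\kappa e_{\ominus(c-b)}(t,t_0)x(t)$, and hypothesis \eqref{ProofBEq1} together with Lemma~\ref{AddBL1} (applied to $\ominus(c-b)$, after checking $\ominus(c-b)\in\mathcal R^+$, which holds because $1+\mu(c-b)>0$) keeps $e_{\ominus(c-b)}(t,t_0)$ bounded, so $y(t)\to 0$ as well; finally $z(t)=N-x(t)-y(t)\to N$. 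So the heart of the matter is the single limit $x(t)=e_{\ominus g}(t,t_0)x_0\to 0$, equivalently $e_g(t,t_0)\to\infty$.

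To get that, I would first show $g\in\mathcal R^+$, i.e.\ $g(t)\ge 0$ (so that $e_g(t,t_0)>0$ and is nondecreasing in a suitable sense): from the definition
\[
g(t)=\frac{b(t)\kappa}{\kappa\bigl(1+\mu(t)(c-b)(t)\bigr)+e_{c-b}(\sigma(t),t_0)},
\]
the numerator is $\ge 0$ and the denominator is $>0$ because $1+\mu(c-b)>0$ and $e_{c-b}(\sigma(t),t_0)>0$ (Theorem on positivity of $e_p$ for $p\in\mathcal R^+$; note $c-b\in\mathcal R^+$). Then by Lemma~\ref{AddBL2}, $e_g(t,t_0)\ge 1+\int_{t_0}^t g(\tau)\,\Delta\tau$, so it is enough to prove $\int_{t_0}^\infty g(\tau)\,\Delta\tau=\infty$. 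Here is where I would use the two hypotheses in tandem. Using $e_{c-b}(\sigma(t),t_0)=(1+\mu(t)(c-b)(t))\,e_{c-b}(t,t_0)$ and Lemma~\ref{AddBL1} plus \eqref{ProofBEq1}, we have $e_{c-b}(t,t_0)\le e^{L}$, hence $e_{c-b}(\sigma(t),t_0)\le (1+\mu(t)(c-b)(t))e^{L}$. Therefore the denominator of $g$ is at most $(1+\mu(t)(c-b)(t))(\kappa+e^{L})$, giving the pointwise bound
\[
g(t)\ \ge\ \frac{\kappa}{\kappa+e^{L}}\cdot\frac{b(t)}{1+\mu(t)(c-b)(t)}.
\]
Integrating and invoking \eqref{ProofBEq2} yields $\int_{t_0}^\infty g(\tau)\,\Delta\tau=\infty$, so $e_g(t,t_0)\to\infty$ and $x(t)\to 0$.

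A couple of technical points deserve care. One must double-check the monotonicity/growth claim for $e_g(t,t_0)$: Lemma~\ref{AddBL2} requires $g\in{\rm C}_{\rm rd}$ and $g\ge 0$, which follows from rd-continuity of $b,c$ and the positivity argument above, so the lower bound $e_g(t,t_0)\ge 1+\int_{t_0}^t g$ is legitimate; then $e_{\ominus g}(t,t_0)=1/e_g(t,t_0)\to 0$. Also one should confirm $g\oplus(c-b)\in\mathcal R$ so that the formula for $y$ in Theorem~\ref{Thm1} is valid en route, though for the limit of $y$ it is cleaner to use the relation $y=\kappa e_{\ominus(c-b)}(\cdot,t_0)x$ directly as above and avoid manipulating $e_{\ominus(g\oplus(c-b))}$. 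The main obstacle, such as it is, is not any single deep step but marshalling the time-scales inequalities in the right direction: the subtlety is that one needs an \emph{upper} bound on $e_{c-b}(\sigma(t),t_0)$ to get a \emph{lower} bound on $g$, and this is exactly what condition \eqref{ProofBEq1} buys, while \eqref{ProofBEq2} then does the rest; keeping the $1+\mu(c-b)$ factors bookkept correctly (they appear in $e_{c-b}(\sigma,\cdot)$ and in the denominator of $g$ and must cancel) is the one place an error could creep in.
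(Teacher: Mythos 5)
Your proof that $x(t)\to 0$ coincides with the paper's argument: you factor the denominator of $g$ via $e_{c-b}(\sigma(t),t_0)=(1+\mu(t)(c-b)(t))\,e_{c-b}(t,t_0)$, bound $e_{c-b}(t,t_0)\le e^{L}$ by Lemma~\ref{AddBL1} and \eqref{ProofBEq1}, obtain $g(t)\ge\frac{\kappa}{\kappa+e^{L}}\cdot\frac{b(t)}{1+\mu(t)(c-b)(t)}\ge 0$, and conclude $e_g(t,t_0)\to\infty$ from Lemma~\ref{AddBL2} and \eqref{ProofBEq2}; that part is correct and is exactly what the paper does.

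The gap is your deduction of $y(t)\to 0$. The claim that \eqref{ProofBEq1} together with Lemma~\ref{AddBL1} keeps $e_{\ominus(c-b)}(t,t_0)$ bounded is not true: \eqref{ProofBEq1} bounds $\int_{t_0}^t(c-b)$ from \emph{above}, which bounds $e_{c-b}(t,t_0)$ above by $e^{L}$ and hence $e_{\ominus(c-b)}(t,t_0)=1/e_{c-b}(t,t_0)$ only from \emph{below} by $e^{-L}$; applying Lemma~\ref{AddBL1} to $\ominus(c-b)$ instead gives $e_{\ominus(c-b)}(t,t_0)\le\exp\bigl\{\int_{t_0}^t\frac{(b-c)(\tau)}{1+\mu(\tau)(c-b)(\tau)}\,\Delta\tau\bigr\}$, an integral the hypotheses do not control. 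In the paradigmatic case of constants $b>c>0$ on $\mathbb{T}=\mathbb{R}$, which satisfies \eqref{ProofBEq1}--\eqref{ProofBEq2}, one has $e_{\ominus(c-b)}(t,t_0)=e^{(b-c)(t-t_0)}\to\infty$, and $y(t)\sim Ce^{-ct}$: the decay of $y$ comes from a cancellation between the growth of $e_{\ominus(c-b)}$ and the decay of $x$, which the estimate ``bounded factor times $o(1)$'' discards. What has to be shown is that $e_{g\oplus(c-b)}(t,t_0)=e_g(t,t_0)\,e_{c-b}(t,t_0)\to\infty$, since $y=y_0/\bigl(e_g(t,t_0)e_{c-b}(t,t_0)\bigr)$ by Theorem~\ref{Thm1}; this is strictly stronger than $e_g\to\infty$ and requires an estimate in which $c$ genuinely enters (note that for $c\equiv 0$, which the stated hypotheses allow, $(x+y)^{\Delta}=0$ and $y\to x_0+y_0\neq 0$, so no argument that never uses $c$ beyond your bounds can close this step). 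The paper itself is terse at this point---after $e_{\ominus g}\to 0$ it simply cites Theorem~\ref{Thm1}---but the specific justification you give fails, and with it the limits for $y$ and for $z=N-x-y$ are not established.
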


\begin{proof}
By Lemma~\ref{AddBL1}, 
\begin{equation*}
0 < e_{c-b}(t,t_0) \leq e^{\int_{t_0}^t (c-b)(\tau) \, \Delta \tau} 
\stackrel{\eqref{ProofBEq1}}{\leq} e^{L}, \quad t\geq t_0.
\end{equation*}
By Lemma~\ref{AddBL2}, since $g \geq 0$, we get
\begin{align*}
e_g(t,t_0) & \geq 1+ \int_{t_0}^t g(\tau) \, \Delta \tau= 1+\int_{t_0}^t 
\frac{b(\tau)}{1+\mu(t)(c-b)(\tau)}\frac{\kappa}{\kappa+e_{c-b}(\tau,t_0)} \, \Delta \tau\\
&\geq 1+ \frac{\kappa}{\kappa+e^{L} } \int_{t_0}^t \frac{b(\tau)}{1+\mu(t)(c-b)(\tau)} 
\, \Delta \tau \stackrel{\eqref{ProofBEq2}}{\to} \infty, \quad t \to \infty.
\end{align*}
Then $e_{\ominus g}(t,t_0) \to 0$ as $t \to \infty$, so that 
\begin{equation*}
\lim_{t \to \infty} x(t)=0, \quad \lim_{t \to \infty} y(t)=0, 
\quad \lim_{t \to \infty} z(t)=N
\end{equation*}
due to Theorem~\ref{Thm1}.
\end{proof}

\begin{corollary} 
If $b(t)=c(t)$ for all $t \in \mathbb{T}$, 
then the conclusion of Theorem~\ref{AddBT1} holds provided 
\begin{equation*}
\int_{t_0}^{\infty} b(\tau) \, \Delta \tau = \infty.
\end{equation*}
\end{corollary}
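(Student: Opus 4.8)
The plan is to derive the corollary as a direct specialization of Theorem~\ref{AddBT1} to the case $b \equiv c$. First I would observe that when $b(t)=c(t)$ for all $t\in\mathbb{T}$, we have $c-b\equiv 0$, which is trivially in $\mathcal{R}^+$ since $1+\mu(t)\cdot 0 = 1 > 0$. Thus the regressivity hypothesis of Theorem~\ref{AddBT1} is automatically satisfied, and we need only check that the two integral conditions \eqref{ProofBEq1} and \eqref{ProofBEq2} follow from the stated assumption $\int_{t_0}^\infty b(\tau)\,\Delta\tau = \infty$ together with $b,c:\mathbb{T}\to\mathbb{R}_0^+$.

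For \eqref{ProofBEq1}: since $c-b\equiv 0$, the integral $\int_{t_0}^t (c-b)(\tau)\,\Delta\tau = 0 \leq L$ for any choice of $L>0$, so this condition holds vacuously. For \eqref{ProofBEq2}: again using $c-b\equiv 0$, the integrand becomes $\frac{b(\tau)}{1+\mu(\tau)\cdot 0} = b(\tau)$, so the condition $\int_{t_0}^\infty \frac{b(\tau)}{1+\mu(\tau)(c-b)(\tau)}\,\Delta\tau = \infty$ is exactly $\int_{t_0}^\infty b(\tau)\,\Delta\tau = \infty$, which is the hypothesis of the corollary. With both conditions verified, Theorem~\ref{AddBT1} applies and yields that all solutions of \eqref{SIRT1} converge to $(0,0,N)$, which is precisely the conclusion of Theorem~\ref{AddBT1} that the corollary asserts.

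Since every step is a routine substitution, I do not anticipate any genuine obstacle here; the only thing worth being careful about is confirming that $g$, as defined in Theorem~\ref{Thm1}, remains nonnegative and rd-continuous in this specialized setting so that Lemma~\ref{AddBL2} still applies inside the proof of Theorem~\ref{AddBT1} — but in fact $g(t) = \frac{b(t)\kappa}{1+\kappa}$ in this case (as noted in the remark following Theorem~\ref{Thm1}), which is manifestly nonnegative and rd-continuous whenever $b$ is. So the proof is essentially a one-line invocation: \emph{Apply Theorem~\ref{AddBT1} with $c-b\equiv 0$; conditions \eqref{ProofBEq1} and \eqref{ProofBEq2} reduce respectively to a triviality and to the stated hypothesis.}
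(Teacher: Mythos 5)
Your proposal is correct and is exactly the argument the paper intends (the corollary is stated without proof as a direct specialization of Theorem~\ref{AddBT1}): with $c-b\equiv 0$ one has $c-b\in\mathcal{R}^+$ trivially, condition \eqref{ProofBEq1} holds with any $L>0$, and \eqref{ProofBEq2} reduces to the stated hypothesis. No issues.
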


\begin{corollary} 
If $b$ and $c$ are constants, then the conclusion of 
Theorem~\ref{AddBT1} holds provided
\begin{equation*}
c-b \in \mathcal{R}^+ \quad \mbox{and } \quad b \geq c.
\end{equation*} 
\end{corollary}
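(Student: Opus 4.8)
The plan is to show that the constant coefficients $b$ and $c$ satisfy the two hypotheses \eqref{ProofBEq1} and \eqref{ProofBEq2} of Theorem~\ref{AddBT1}, whence its conclusion applies verbatim. All the remaining hypotheses of Theorem~\ref{AddBT1}---that $\mathbb{T}$ is unbounded from above, that $b,c:\mathbb{T}\to\mathbb{R}_0^+$, that $c-b\in\mathcal{R}^+$, and that $x_0,y_0>0$, $z_0\geq 0$---are either assumed here or already built into the formulation of \eqref{SIRT1}. One tacitly needs $b>0$: if $b=0$, then $b\geq c\geq 0$ forces $c=0$, all right-hand sides of \eqref{SIRT1} vanish, every solution is constant, and no solution with $y_0>0$ reaches $(0,0,N)$; this degenerate case is precisely where the verification of \eqref{ProofBEq2} below breaks down, so it must be excluded.

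First I would verify \eqref{ProofBEq1}. Since $b\geq c$, the constant $c-b$ is nonpositive, and the $\Delta$-integral of a constant over $[t_0,t]$ is that constant times $t-t_0$; hence
\[
\int_{t_0}^t (c-b)(\tau)\,\Delta\tau=(c-b)(t-t_0)\leq 0\leq 1\qquad\text{for all }t\geq t_0,
\]
so \eqref{ProofBEq1} holds with $L=1$.

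Next I would verify \eqref{ProofBEq2}. From $c-b\in\mathcal{R}^+$ we get $1+\mu(\tau)(c-b)>0$, while from $\mu(\tau)\geq 0$ together with $c-b\leq 0$ we get $1+\mu(\tau)(c-b)\leq 1$. Thus $0<1+\mu(\tau)(c-b)\leq 1$, which gives
\[
\frac{b}{1+\mu(\tau)(c-b)(\tau)}\geq b\geq 0\qquad\text{for all }\tau\geq t_0.
\]
Because $\mathbb{T}$ is unbounded from above, $t-t_0\to\infty$ along $\mathbb{T}$, so
\[
\int_{t_0}^{\infty}\frac{b(\tau)}{1+\mu(\tau)(c-b)(\tau)}\,\Delta\tau\geq\int_{t_0}^{\infty}b\,\Delta\tau=b\lim_{t\to\infty}(t-t_0)=\infty,
\]
using $b>0$. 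This establishes \eqref{ProofBEq2}, and Theorem~\ref{AddBT1} then yields that every solution of \eqref{SIRT1} converges to $(0,0,N)$, $N=x_0+y_0+z_0$, which is the assertion.

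The computation is essentially bookkeeping, so there is no serious obstacle; the one point deserving care is the degenerate case $b=0$ (forcing $b=c=0$), which makes the stated conclusion false and must be read out of the hypothesis. A minor but worth-stating point is the inequality $1+\mu(\tau)(c-b)\leq 1$: it uses both the sign condition $b\geq c$ and the nonnegativity of the graininess $\mu$, and it is exactly this bound---stronger than the mere positivity coming from $c-b\in\mathcal{R}^+$---that allows the comparison with the divergent integral $\int_{t_0}^{\infty}b\,\Delta\tau$.
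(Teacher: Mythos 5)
Your verification of hypotheses \eqref{ProofBEq1} and \eqref{ProofBEq2} is correct and is exactly the argument the paper intends, since the corollary is stated without proof as a direct specialization of Theorem~\ref{AddBT1}: for constant $b\geq c$ the integral of $c-b$ is nonpositive, and the integrand in \eqref{ProofBEq2} is bounded below by $b$ because $0<1+\mu(c-b)\leq 1$. Your observation that $b>0$ must tacitly be assumed---since $b=c=0$ satisfies $c-b\in\mathcal{R}^+$ and $b\geq c$ yet produces constant solutions with $y\equiv y_0>0$ that do not converge to $(0,0,N)$---is a legitimate, if minor, sharpening that the paper glosses over.
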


\begin{theorem}
\label{AddBT2}
Consider \eqref{SIRT1} and assume $\mathbb{T}$ is unbounded from above. 
Assume $b,c: \mathbb{T} \to \mathbb{R}_0^+$, $c-b \in \mathcal{R}^+$, 
$x_0, y_0 >0$, and $z_0 \geq 0$. Moreover, assume 
\begin{equation}
\label{ProofBEq3}
\exists \, M >0: \, b(t)\leq M(c-b)(t) 
\quad \quad \mbox{ for all } t \in \mathbb{T}
\end{equation}
and 
\begin{equation}
\label{ProofBEq4}
\int_{t_0}^{\infty} (c-b)(\tau)\, \Delta \tau = \infty.
\end{equation}
Then all solutions of \eqref{SIRT1} converge to the equilibrium 
$(\alpha, 0, N-\alpha)$ for some $\alpha \in (0,N)$.
\end{theorem}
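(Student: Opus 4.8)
The plan is to use the explicit solution formulas from Theorem~\ref{Thm1} and analyze the behavior of the relevant exponential functions. From Theorem~\ref{Thm1}, we have $x(t) = e_{\ominus g}(t,t_0)x_0$ and $y(t) = e_{\ominus(g\oplus(c-b))}(t,t_0)y_0$, so the key is to control the growth of $e_g(t,t_0)$ and $e_{c-b}(t,t_0)$ as $t\to\infty$. The first step is to show that $e_{c-b}(t,t_0)\to\infty$: since $c-b\in\mathcal{R}^+$ and $(c-b)(t)\geq 0$ is forced by \eqref{ProofBEq4} being $+\infty$ together with $c-b\in\mathcal{R}^+$ (or at least the integral diverges to $+\infty$), Lemma~\ref{AddBL2} gives $e_{c-b}(t,t_0)\geq 1+\int_{t_0}^t(c-b)(\tau)\,\Delta\tau\to\infty$ by \eqref{ProofBEq4}. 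This immediately yields $\lim_{t\to\infty}y(t)=0$, because $y(t)=\kappa e_{\ominus(c-b)}(t,t_0)x(t) = \kappa x(t)/e_{c-b}(t,t_0)\leq \kappa x_0/e_{c-b}(t,t_0)\to 0$, using that $x(t)=e_{\ominus g}(t,t_0)x_0\leq x_0$ since $g\geq 0$ and hence $e_{\ominus g}(t,t_0)\in(0,1]$.

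The second step is the crux: showing $\lim_{t\to\infty}x(t)=\alpha$ exists and is strictly positive. Since $x^\Delta=-g(t)x^\sigma$ with $g\geq 0$, the function $x$ is nonincreasing and bounded below by $0$, so the limit $\alpha:=\lim_{t\to\infty}x(t)\in[0,N]$ exists; the work is to show $\alpha>0$. Equivalently, I need $e_g(t,t_0)$ to remain bounded, i.e., $\int_{t_0}^\infty g(\tau)\,\Delta\tau<\infty$. Here I would use hypothesis \eqref{ProofBEq3}: from the definition of $g$,
\begin{equation*}
g(\tau) = \frac{b(\tau)\kappa}{\kappa(1+\mu(\tau)(c-b)(\tau))+e_{c-b}(\sigma(\tau),t_0)}
\leq \frac{b(\tau)\kappa}{e_{c-b}(\sigma(\tau),t_0)}
\leq \frac{M\kappa(c-b)(\tau)}{e_{c-b}(\sigma(\tau),t_0)},
\end{equation*}
using $b\leq M(c-b)$ and dropping the nonnegative first term in the denominator. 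Now by the second identity in the Lemma following Theorem (the integration formula $\int_a^b p(t)e_p(c,\sigma(t))\,\Delta t = e_p(c,a)-e_p(c,b)$), applied with $p=c-b$ and $c$ replaced by $t_0$,
\begin{equation*}
\int_{t_0}^t \frac{(c-b)(\tau)}{e_{c-b}(\sigma(\tau),t_0)}\,\Delta\tau
= \int_{t_0}^t (c-b)(\tau)\,e_{c-b}(t_0,\sigma(\tau))\,\Delta\tau
= e_{c-b}(t_0,t_0)-e_{c-b}(t_0,t) = 1 - \frac{1}{e_{c-b}(t,t_0)} \leq 1.
\end{equation*}
Hence $\int_{t_0}^\infty g(\tau)\,\Delta\tau\leq M\kappa<\infty$, so $e_g(t,t_0)$ is bounded, say by $e^{M\kappa}$ via Lemma~\ref{AddBL2} (or simply because the telescoping sum/integral converges), giving $x(t)=e_{\ominus g}(t,t_0)x_0 = x_0/e_g(t,t_0)\geq x_0 e^{-M\kappa}>0$. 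Thus $\alpha\geq x_0e^{-M\kappa}>0$, and since $x$ is nonincreasing, $\alpha\leq x_0<N$.

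The final step is to assemble the conclusion: $x(t)\to\alpha\in(0,N)$, $y(t)\to 0$, and therefore $z(t)=N-x(t)-y(t)\to N-\alpha$. By Lemma~\ref{AddBL3}, $(\alpha,0,N-\alpha)$ with $\alpha\in(0,N)$ is indeed an equilibrium of \eqref{SIRT1}, so the solution converges to an equilibrium of the asserted form. I expect the main obstacle to be the careful justification of the integration-by-identity step converting $\int g$ into a telescoping expression — specifically making sure the hypotheses guarantee $c-b\in\mathcal{R}^+$ so that $e_{c-b}(\cdot,t_0)>0$ and the identity from the Lemma applies, and confirming that \eqref{ProofBEq4} forces the integral to genuinely diverge to $+\infty$ (not merely be unbounded in absolute value), which is what makes $e_{c-b}(t,t_0)\to\infty$ and hence $y(t)\to0$. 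A minor subtlety worth a remark is that the constant $M$ in \eqref{ProofBEq3} also controls how close $\alpha$ can get to $0$, but existence of a positive limit is all that is claimed.
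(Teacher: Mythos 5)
Your proposal is correct and follows essentially the same route as the paper's proof: the same pointwise bound $g \leq \kappa b/e_{c-b}(\sigma(\cdot),t_0) \leq \kappa M (c-b)/e_{c-b}(\sigma(\cdot),t_0)$, the same telescoping identity from \protect{\cite[Theorem 2.39]{MR1843232}} giving $\int_{t_0}^t g(\tau)\,\Delta\tau < \kappa M$, and the same use of Lemmas~\ref{AddBL2} and Theorem~\ref{Thm1} to conclude $x(t)\to\alpha\geq x_0e^{-\kappa M}>0$, $y(t)\to 0$, $z(t)\to N-\alpha$. The only slip is your justification that $(c-b)(t)\geq 0$ (needed to invoke Lemma~\ref{AddBL2} for $e_{c-b}$): this does not follow from the divergence of the integral in \eqref{ProofBEq4}, but it does follow immediately from \eqref{ProofBEq3} together with $b(t)\geq 0$ and $M>0$, which is exactly the one-line observation the paper makes at the start of its proof.
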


\begin{proof}
Note first that \eqref{ProofBEq3} implies $c(t)\geq b(t)$ 
for all $t \in \mathbb{T}$. By Lemma~\ref{AddBL2}, we have 
\begin{equation*}
e_{c-b}(t,t_0) \geq 1+ \int_{t_0}^t (c-b) (\tau) \, 
\Delta \tau \stackrel{\eqref{ProofBEq4}}{\to} \infty, \quad t \to \infty, 
\end{equation*}
so 
\begin{equation}
\label{ProofBEq5}
\lim_{t \to \infty} e_{\ominus (c-b)} (t,t_0) = 0.
\end{equation}
Next, 
\begin{align*}
g(t)&=\frac{b(t)}{1+\mu(t)(c-b)(t)} \frac{\kappa}{
\kappa+e_{c-b}(t,t_0)}\leq \frac{b(t)}{1+\mu(t)(c-b)(t)}  
\frac{\kappa}{e_{c-b}(t,t_0)} = \frac{\kappa b(t)}{e_{c-b}(\sigma(t),t_0)}\\ 
&\leq \frac{\kappa M(c-b)(t)}{e_{c-b}(\sigma(t),t_0)},
\end{align*}
and thus, using \cite[Theorem 2.39]{MR1843232}, we get 
\begin{equation*}
\int_{t_0}^t g(\tau) \, \Delta \tau \leq M\kappa  
\int_{t_0}^t \frac{(c-b)(\tau)}{e_{c-b}(\sigma(\tau), t_0)} \, \Delta \tau 
=  M\kappa \left[ 1- \frac{1}{e_{c-b}(t,t_0)} \right] < M\kappa.
\end{equation*}
By Lemma~\ref{AddBL2}, since $g\geq 0$ for all $t \in \mathbb{T}$, we get 
\begin{equation*}
1\leq 1+\int_{t_0}^t g(\tau) \, \Delta \tau \leq e_g(t,t_0) 
\leq \exp\left \{ \int_{t_0}^t g(\tau) \, \Delta \tau \right \} 
< e^{\kappa M} \quad \mbox{ for all } t \geq t_0,
\end{equation*} 
so $\lim_{t \to \infty}e_g(t,t_0)$ exists and is bounded from below 
by $1$ and bounded from above by $e^{\kappa M}$.
We therefore get that $\lim_{t \to \infty}e_{\ominus g}(t,t_0)$ exists 
and is greater than or equal to $e^{-\kappa M}>0$. Hence,
\begin{equation*}
\alpha := \lim_{t \to \infty} x(t)>0, \quad \lim_{t \to \infty} y(t)=0, 
\quad \lim_{t \to \infty} z(t)=N-\alpha
\end{equation*}
due to \eqref{ProofBEq5} and Theorem~\ref{Thm1}.
\end{proof}

\begin{corollary}
If $b$ and $c$ are constants, then the conclusion of 
Theorem~\ref{AddBT2} holds provided 
\begin{displaymath}
b<c.
\end{displaymath}
\end{corollary}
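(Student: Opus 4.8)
The plan is to verify that constant coefficients $b,c$ with $0\le b<c$ satisfy every hypothesis of Theorem~\ref{AddBT2}, and then to invoke that theorem directly. The standing assumptions ($\mathbb{T}$ unbounded from above, $b,c:\mathbb{T}\to\mathbb{R}_0^+$, $x_0,y_0>0$, $z_0\ge0$) carry over verbatim, so the task reduces to checking the regressivity condition $c-b\in\mathcal{R}^+$ together with \eqref{ProofBEq3} and \eqref{ProofBEq4}.

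First I would note that $b<c$ yields $d:=c-b>0$, hence $1+\mu(t)d\ge1>0$ for every $t\in\mathbb{T}$, which gives $c-b\in\mathcal{R}^+$. Next, for \eqref{ProofBEq3} I would take $M:=1+b/d>0$; then $b(t)=b\le b+d=Md=M(c-b)(t)$ for all $t\in\mathbb{T}$. Finally, for \eqref{ProofBEq4}, the constant function $1$ integrates to $\int_{t_0}^t1\,\Delta\tau=t-t_0$ on any time scale, so $\int_{t_0}^t(c-b)(\tau)\,\Delta\tau=d\,(t-t_0)\to\infty$ as $t\to\infty$ because $\mathbb{T}$ is unbounded from above.

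Having checked these three items, Theorem~\ref{AddBT2} applies unchanged and gives convergence of every solution of \eqref{SIRT1} to an equilibrium $(\alpha,0,N-\alpha)$ with $\alpha\in(0,N)$, as claimed. There is no real obstacle in this argument; the only point worth highlighting is that the single inequality $b<c$ simultaneously delivers the regressivity $c-b\in\mathcal{R}^+$ and the divergence of the time-scale integral of $c-b$, so it is precisely the right replacement for the pair \eqref{ProofBEq3}--\eqref{ProofBEq4} in the constant-coefficient setting.
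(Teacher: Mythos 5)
Your verification is correct and is exactly the argument the paper leaves implicit (the corollary is stated as a direct consequence of Theorem~\ref{AddBT2}): positivity of $c-b$ gives $c-b\in\mathcal{R}^+$, a constant $M$ such as $1+b/(c-b)$ gives \eqref{ProofBEq3} even when $b=0$, and $\int_{t_0}^t(c-b)\,\Delta\tau=(c-b)(t-t_0)\to\infty$ gives \eqref{ProofBEq4}. Nothing further is needed.
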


Finally, we give a result that describes 
the monotone behavior of the solution $y$.

\begin{theorem}
\label{AddBT3}
If $c(t) \geq b(t)$ for all $t \in \mathbb{T}$ or $\frac{x_0}{x_0+y_0}b(t)
\leq c(t)\leq b(t)$ for all $t \in \mathbb{T}$, then $y$ is decreasing. 
If $\frac{x_0}{x_0+y_0}b(t_0) \geq c(t_0)$, then $y^{\Delta}(t_0)\geq 0$. 
\end{theorem}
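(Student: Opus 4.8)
The plan is to read the sign of $y^{\Delta}$ directly off the second equation of \eqref{SIRT1}. Factoring out $y^{\sigma}$ gives
\[
y^{\Delta} = y^{\sigma}\!\left(\frac{b(t)x}{x+y} - c(t)\right),
\]
and since $y\colon\mathbb{T}\to\mathbb{R}^{+}$ we have $y^{\sigma}(t)=y(\sigma(t))>0$, so the sign of $y^{\Delta}(t)$ agrees with that of $\frac{b(t)x(t)}{x(t)+y(t)}-c(t)$. Hence to conclude that $y$ is decreasing it suffices to show $\frac{b(t)x(t)}{x(t)+y(t)}\le c(t)$ for all $t\in\mathbb{T}$ with $t\ge t_0$.

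Under the first hypothesis this is immediate: because $0<\frac{x}{x+y}\le 1$, we get $\frac{b(t)x}{x+y}\le b(t)\le c(t)$, so $y^{\Delta}\le 0$ and $y$ is decreasing.

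Under the second hypothesis I would reuse the auxiliary quantity $w:=\frac{x}{y}>0$ from the proof of Theorem~\ref{Thm1}, which satisfies $w^{\Delta}=(c-b)(t)w$. Since $c(t)\le b(t)$ and $w>0$, we have $w^{\Delta}\le 0$ on $\mathbb{T}$, so $w$ is nonincreasing; hence $w(t)\le w(t_0)=\frac{x_0}{y_0}$ for $t\ge t_0$. As $s\mapsto \frac{s}{s+1}$ is increasing on $(0,\infty)$ and $\frac{x}{x+y}=\frac{w}{w+1}$, this gives $\frac{x(t)}{x(t)+y(t)}\le \frac{x_0}{x_0+y_0}$, and therefore $\frac{b(t)x(t)}{x(t)+y(t)}\le b(t)\,\frac{x_0}{x_0+y_0}\le c(t)$, which is what we needed; again $y^{\Delta}\le 0$ and $y$ is decreasing.

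For the last assertion, evaluate the factored equation at $t=t_0$: there $\frac{x(t_0)}{x(t_0)+y(t_0)}=\frac{x_0}{x_0+y_0}$, so $y^{\Delta}(t_0)=y^{\sigma}(t_0)\bigl(b(t_0)\tfrac{x_0}{x_0+y_0}-c(t_0)\bigr)\ge 0$ by the assumption $\frac{x_0}{x_0+y_0}b(t_0)\ge c(t_0)$. There is essentially no obstacle in this argument; the only two points deserving a word of justification are the passage from $w^{\Delta}\le 0$ to $w$ nonincreasing on an arbitrary time scale (a standard monotonicity consequence of the time-scale mean value theorem) and the elementary monotonicity of $s\mapsto \frac{s}{s+1}$.
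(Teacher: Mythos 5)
Your proposal is correct and follows essentially the same route as the paper: read the sign of $y^{\Delta}$ from the second equation of \eqref{SIRT1}, handle the case $c\geq b$ by the elementary bound $\frac{x}{x+y}\leq 1$, and handle the case $\frac{x_0}{x_0+y_0}b\leq c\leq b$ by showing the susceptible fraction satisfies $\frac{x(t)}{x(t)+y(t)}\leq\frac{x_0}{x_0+y_0}$, with the evaluation at $t_0$ giving the last claim. The only (minor) difference is how that key inequality is obtained: you reuse $w=\frac{x}{y}$ with $w^{\Delta}=(c-b)w\leq 0$ and the increasing map $s\mapsto\frac{s}{s+1}$, while the paper computes $\left(\frac{x}{x+y}\right)^{\Delta}=\frac{(c-b)xy^{\sigma}}{(x+y)(x^{\sigma}+y^{\sigma})}\leq 0$ directly; both are equally valid one-step monotonicity arguments on time scales.
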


\begin{proof}
If $\frac{x_0}{x_0+y_0}b(t_0)> c(t_0)$, then 
\begin{equation*}
y^{\Delta}(t_0) = \frac{b(t_0) x(t_0)y(\sigma(t_0))}{x(t_0)+y(t_0)}
-c(t_0)y(\sigma(t_0))= 
\left[ b(t_0)\frac{x_0}{x_0+y_0} -c(t_0) 
\right]y(\sigma(t_0))\geq 0.
\end{equation*}
If $c(t)\geq b(t)$ for all $t \in \mathbb{T}$, then 
\begin{align*}
y^{\Delta}(t)&=\frac{b(t)x(t)y(\sigma(t))}{x(t)+y(t)}-c(t)y(\sigma(t))
\leq \frac{b(t)x(t)y(\sigma(t))}{x(t)+y(t)}-b(t)y(\sigma(t))\\[2mm]
&=b(t)y(\sigma(t))\left[ \frac{x(t)}{x(t)+y(t)}-1\right]
= -\frac{b(t)y(t)y(\sigma(t))}{x(t)+y(t)}\leq 0 
\quad \mbox{for all } t\in \mathbb{T}.
\end{align*}
Next, we calculate 
\begin{equation*}
\left(   \frac{x}{x+y}\right)^{\Delta} 
= \frac{x^{\Delta}y- y^{\Delta}x}{(x+y)(x^{\sigma}+y^{\sigma})}
= \frac{-\frac{b(t)xy^{\sigma}}{x+y}y - \left( \frac{b(t)xy^{\sigma}}{x+y} 
- c(t)y^{\sigma}  \right)x   }{(x+y)(x^{\sigma}+y^{\sigma})}
= \frac{(c-b)(t)xy^{\sigma}}{(x+y)(x^{\sigma}+y^{\sigma})}.
\end{equation*}
If $\frac{x_0}{x_0+y_0}b(t)\leq c(t)\leq b(t)$ for all $t \in \mathbb{T}$, then 
\begin{align*}
y^{\Delta}(t)&= \frac{b(t)x(t)y(\sigma(t))}{x(t)+y(t)}-c(t)y(\sigma(t)) \leq 
b(t)y(\sigma(t))\frac{x_0}{x_0+y_0} - c(t)y(\sigma(t))\\
&= \left[ b(t)\frac{x_0}{x_0+y_0} - c(t) \right] y(\sigma(t))\leq 0 
\quad \mbox{ for all } t \in \mathbb{T}.
\end{align*}
This completes the proof.
\end{proof}

\begin{example}
\label{ex:29}
For $\mathbb{T}=\mathbb{Z}$, $S(0)=0.8, I(0)=0.1, R(0)=0.1$, and $b=0.2$, 
we get for $c=0.3$ the limit behavior for the solutions as shown in 
Figure~\ref{Figlimit1}. Changing $c$ to $0.1$ such that $b>c$, we get 
the behavior demonstrated in Figure~\ref{Figlimit2}.
\begin{center}
\begin{figure}
\begin{subfigure}{.5\textwidth}
\centering
\includegraphics[width=.9\linewidth]{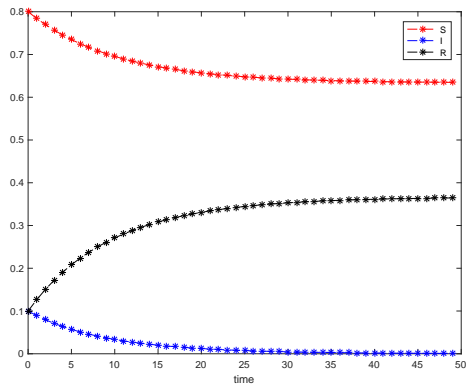}
\captionsetup{width=0.48\textwidth}
\centering
\caption{$b=0.2<0.3=c$}\label{Figlimit1}
\end{subfigure}%
\begin{subfigure}{.5\textwidth}
\includegraphics[width=.9\linewidth]{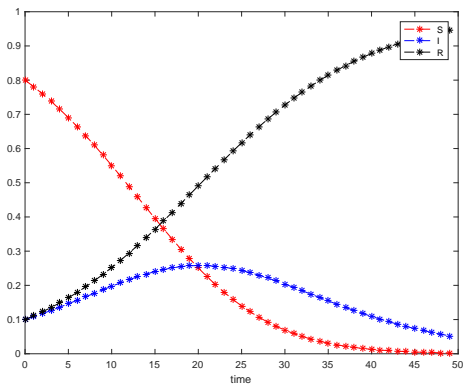}
\captionsetup{width=0.48\textwidth}
\centering
\caption{$b=0.2>0.1=c$}\label{Figlimit2}
\end{subfigure}
\captionsetup{width=0.8\textwidth}
\centering
\caption{the $x$=Susceptible ($S$), $y$=Infected ($I$), and $z$=Removed ($R$) 
long term behavior of Example~\ref{ex:29}.}
\label{example2}
\end{figure}
\end{center}
\end{example}


\subsection*{Acknowledgement}

Torres has been partially supported by FCT within CIDMA project
UID/MAT/04106/2019, and by TOCCATA FCT project PTDC/EEI-AUT/2933/2014.
The authors are very grateful to three anonymous reviewers 
for several constructive comments, questions and suggestions,
which helped them to improve the paper.



\end{document}